\documentclass{amsart}[12pt]
\usepackage{amssymb,amsmath}
\usepackage{enumerate}
\usepackage[english]{babel}
\usepackage{color}

\newcommand{\Om} {\Omega}
\newcommand {\ep} {\varepsilon}
\newcommand {\om} {\omega}
\newcommand {\sg} {\sigma}
\newcommand \ii {\infty}
\newcommand {\dt} {\delta}
\newcommand {\al} {\alpha}
\newcommand {\bt} {\beta}
\newcommand {\lb} {\lambda}
\newcommand {\Lb} {\Lambda}

\newcommand {\ol} {\overline}
\newcommand {\sm} {\setminus}
\newcommand {\su} {\subset}
\newcommand {\wt} {\widetilde}
\newcommand {\wh} {\widehat}

\newcommand {\cal} {\mathcal}
\newcommand {\mb} {\mathbf}

\newtheorem{teo}{Theorem}[section]
\newtheorem{pro}{Proposition}[section]
\newtheorem{cor}{Corollary}[section]

\newtheorem{lm}{Lemma}[section]

\theoremstyle{definition}
\newtheorem{rem}{Remark}[section]

%\loadmsbm
%\def\qed{\ifhmode\unskip\nobreak\fi\ifmmode\ifinner\else\hsrip5pt\fi\fi
%\hbox{\hskip5pt\vrule width4pt height6pt depth1.5pt\hskip1pt}}
%\baselineskip 14pt
%\parskip 8 pt

\title{Individual ergodic theorems \\
for infinite measure}
\keywords{Infinite measure, Dunford-Schwartz pointwise ergodic theorem, Return Times theorem, bounded Besicovitch sequence, fully symmetric space}
\subjclass[2010]{47A35(primary), 37A30(secondary)}
\begin{document}
\date{July 9, 2019}

\begin{abstract}
Given a $\sigma$\,-\,finite infinite measure space $(\Omega,\mu)$, it is shown that any Dunford-Schwartz operator $T:\, \cal L^1(\Omega)\to\cal L^1(\Omega)$ can be uniquely extended to the space $\cal L^1(\Omega)+\cal L^\ii(\Omega)$. This allows to find the largest subspace $\cal R_\mu$ of $\cal L^1(\Omega)+\cal L^\ii(\Omega)$ such that the ergodic averages $\frac1n\sum\limits_{k=0}^{n-1}T^k(f)$ converge almost uniformly (in Egorov's sense) for every $f\in\cal R_\mu$ and every Dunford-Schwartz operator $T$. Utilizing this result, almost uniform convergence of the averages $\frac1n\sum\limits_{k=0}^{n-1}\bt_kT^k(f)$ for every $f\in\cal R_\mu$, any Dunford-Schwartz operator
$T$ and any bounded Besicovitch sequence $\{\bt_k\}$ is established. Further, given a measure preserving transformation $\tau:\Omega\to\Omega$, Assani's extension of Bourgain's Return Times theorem to $\sigma$\,-\,finite measure is employed to show that for each $f\in\cal R_\mu$ there exists a set $\Omega_f\subset\Omega$ such that $\mu(\Omega\setminus\Omega_f)=0$ and the averages $\frac1n\sum\limits_{k=0}^{n-1}\bt_kf(\tau^k\om)$ converge for all $\om\in\Omega_f$ and any bounded Besicovitch sequence $\{\bt_k\}$. Applications to fully symmetric subspaces $E\subset\cal R_\mu$ are given.
\end{abstract}

\author{VLADIMIR CHILIN, \ DO\u GAN \c C\" OMEZ, \ SEMYON LITVINOV}
\address{The National University of Uzbekistan, Tashkent, Uzbekistan}
\email{vladimirchil@gmail.com; chilin@ucd.uz}
\address{North Dakota State University, P.O.Box 6050, Fargo, ND, 58108, USA}
\email{dogan.comez@ndsu.edu}
\address{Pennsylvania State University \\ 76 University Drive \\ Hazleton, PA 18202, USA}
\email{snl2@psu.edu}

\maketitle

\section{Introduction}

The celebrated Dunford-Schwartz and Wiener-Wintner-type ergodic theorems are two of the major
themes of ergodic theory.  Due to their fundamental roles, these theorems have been revisited ever since their first appearance.  For instance, Garcia \cite{ga} gave an elegant self-contained proof of Dunford-Schwartz theorem, and Assani \cite{as0,as} extended Bourgain's Return Times theorem to $\sg$-finite setting.

In the case of infinite measure, one can ask

\noindent
$(A)$ whether Dunford-Schwartz pointwise ergodic theorem is valid for some functions within the space $\cal L^1+\cal L^\ii$ but outside the union of spaces $\cal L^p$, $1\leq p<\ii$;

\noindent
$(B)$ whether pointwise convergence in Dunford-Schwartz theorem can be replaced by generally stronger almost uniform (in Egorov's sense) convergence.

To answer $(A)$, one needs to first extend a Dunford-Schwartz operator $T:\cal L^1\to \cal L^1$ to the space $\cal L^1+\cal L^\ii$. Thus, we begin by showing, in Section 3, Theorem \ref{t31}, that such an extension $\wt T$ exists and is unique if\, $\wt T|_{\cal L^\ii}$ is $\sg(\cal L^\ii,\cal L^1)$\,-\,continuous.

This fact allows us to assume without loss of generality that any Dunford-Schwartz operator $T$ is defined on the entire space $\cal L^1+\cal L^\ii$. With this assumption, positive solutions to $(A)$ and $(B)$ can be found in \cite[Theorem 3.1]{cl1}, where it was assumed a-priory that $T$ acted in the space $\cal L^1+\cal L^\ii$. In fact, the largest subspace (denoted there by $\cal R_\mu$) of $\cal L^1+\cal L^\ii$ in which the ergodic averages converge almost uniformly was found (see \cite[Theorem 3.4]{cl1}; also, \cite{cl}, \cite{kk}).

In Section 4, we use this result to show almost uniform convergence of Besicovitch weighted ergodic averages in $\cal R_\mu$ (see Theorem \ref{t44}).

In Section 5, we utilize Assani's extension of Return Times theorem to $\sg$-finite measure to show that Wiener-Wintner ergodic theorem holds in $\cal R_\mu$ with the weights $\{\lb^k\}$, $\lb\in\mathbb C_1$, expanded to the set all bounded Besicovitch sequences $\{\bt_k\}$ (see Theorem \ref{t57}).

Section 6 of the article is devoted to applications of the above results to fully symmetric spaces $E\su \cal L^1+\cal L^\ii$ such that
$\mathbf 1\notin E$. It is demonstrated that the class of fully symmetric spaces $E$ with $\mathbf 1\notin E$ is significantly wider than the class of $L^p$\,-\,spaces, $1\leq p<\ii$, including well-known Orlicz, Lorentz and Marcinkiewicz spaces of measurable functions.

\section{Preliminaries}
Let $(\Om,\cal A,\mu) $ be a $\sg$\,-\,finite measure space and let $\cal L^0 =\cal L^0(\Om)$ be the $*$-algebra of equivalence classes of almost everywhere (a.e.) finite complex-valued measurable functions on $\Om$. Given $1\leq p\leq\ii$, let $\cal L^p\su\cal L^0$ be the $L^p$-space on $\Om$ equipped with the standard Banach norm $\|\cdot\|_p$.

A net $\{f_\al\}\su\cal L^0$ is said to converge {\it almost uniformly (a.u.)} to $f\in\cal L^0$ (in Egorov's sense) if for every
$\ep>0$ there exists a set $G\su\Om$ such that $\mu(\Om\sm G)\leq\ep$ and $\|(f-f_\al)\chi_G\|_\ii\to 0$, where $\chi_G$ is the  characteristic function of  set $G$. It is clear that every a.u. convergent net converges almost everywhere (a.e.) and that the converse is not true in general.

Define
\[
\cal R_\mu=\left\{f \in\cal L^1+\cal L^\ii: \ \mu\{|f|> \lb\}<\ii \text{ \ for all \ } \lb>0\right\}.
\]

It is clear that $\cal L^p\su\cal R_\mu$ for each $1\leq p<\ii$. On the other hand, one can verify that if, for example,
$\Om=[1,\ii)$ equipped with Lebesgue measure and $f\in\cal L^\ii(\Om)$ is given by
\[
f(\om)=\sum\limits_{k=1}^\ii 2^{-k}\om^{-1/k},
\]
then $\lim\limits_{\om\to\ii}f(\om)=0$, that is, $f\in\cal R_\mu(\Om)$, but $f\notin\cal L^p(\Om)$ for all $1\leq p<\ii$.

The following characterization of $\cal R_\mu$ is crucial.

\begin{pro}\label{p21}
Let $f\in\cal L^1+\cal L^\ii$. Then
$f\in\cal R_\mu$ if and only if for each $\ep>0$ there exist $g_\ep\in \cal L^1$  and $h_\ep\in\cal L^\ii$ such that
\[
f=g_\ep+h_\ep\text{ \ \  and \ \ }\| h_\ep\|_\ii\leq\ep.
\]
\end{pro}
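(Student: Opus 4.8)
The plan is to prove both implications separately, with the reverse direction being elementary and the forward direction carrying essentially all of the technical weight.

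For the easier implication, I would assume the decompositions exist and fix $\lb>0$. Applying the hypothesis with $\ep=\lb/2$, write $f=g_\ep+h_\ep$ with $g_\ep\in\cal L^1$ and $\|h_\ep\|_\ii\leq\lb/2$. On the set $\{|f|>\lb\}$ one has $|g_\ep|\geq|f|-\|h_\ep\|_\ii>\lb/2$, so $\{|f|>\lb\}\su\{|g_\ep|>\lb/2\}$. Since $g_\ep\in\cal L^1$, Chebyshev's inequality yields $\mu\{|g_\ep|>\lb/2\}\leq 2\|g_\ep\|_1/\lb<\ii$, hence $\mu\{|f|>\lb\}<\ii$; as $\lb>0$ was arbitrary, $f\in\cal R_\mu$.

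For the forward direction, given $f\in\cal R_\mu$ and $\ep>0$, I would use the canonical truncation at level $\ep$: set $h_\ep=f$ on $\{|f|\leq\ep\}$ and $h_\ep=\ep f/|f|$ on $\{|f|>\ep\}$, and put $g_\ep=f-h_\ep$. By construction $\|h_\ep\|_\ii\leq\ep$, while $g_\ep$ is supported on $\{|f|>\ep\}$ and satisfies $|g_\ep|=(|f|-\ep)_+$. The crux is to verify $g_\ep\in\cal L^1$. I would pass to the distribution function via the layer-cake formula,
\[
\|g_\ep\|_1=\int_\Om(|f|-\ep)_+\,d\mu=\int_\ep^\ii\mu\{|f|>s\}\,ds,
\]
and then supply the extra input needed beyond mere finiteness of each level set: fix any decomposition $f=a+b$ with $a\in\cal L^1$, $b\in\cal L^\ii$, and set $M=\|b\|_\ii$. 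For $s>M$ the inclusion $\{|f|>s\}\su\{|a|>s-M\}$ gives $\int_M^\ii\mu\{|f|>s\}\,ds\leq\int_0^\ii\mu\{|a|>u\}\,du=\|a\|_1<\ii$, while the remaining interval $[\ep,M]$ (if nonempty) contributes at most $(M-\ep)\,\mu\{|f|>\ep\}<\ii$, using $f\in\cal R_\mu$. Combining the two estimates yields $\|g_\ep\|_1<\ii$, so $f=g_\ep+h_\ep$ is the desired decomposition.

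The main obstacle is precisely this integrability verification. The defining condition $\mu\{|f|>\lb\}<\ii$ controls each level set individually but says nothing about the rate at which they shrink, so one genuinely needs to exploit membership in $\cal L^1+\cal L^\ii$ to dominate the tail of the distribution function by $\|a\|_1$; the $\cal R_\mu$ hypothesis is then used only to absorb the bounded middle range. Once $g_\ep\in\cal L^1$ is established, all the required properties of the decomposition are immediate.
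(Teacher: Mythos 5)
Your proof is correct, and the reverse implication is essentially the paper's argument (decompose at level $\lb/2$, trap the level set of $f$ inside a level set of the integrable part, apply Chebyshev). The forward direction reaches the same goal by a visibly different route. The paper uses the \emph{hard} truncation $g_\ep=f\chi_{\{|f|>\ep\}}$, $h_\ep=f\chi_{\{|f|\leq\ep\}}$, and verifies $g_\ep\in\cal L^1$ by a purely algebraic identity: writing $f=g+h$ with $g\in\cal L^1$, $h\in\cal L^\ii$, one has $g_\ep=g\chi_{\Om_\ep}+(h-h_\ep)\chi_{\Om_\ep}$, and the second summand is a bounded function supported on the finite-measure set $\Om_\ep=\{|f|>\ep\}$, hence integrable --- no distribution functions needed. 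You instead use the \emph{soft} truncation with $|g_\ep|=(|f|-\ep)_+$ and prove integrability via the layer-cake formula, splitting the tail integral $\int_\ep^\ii\mu\{|f|>s\}\,ds$ at $s=M=\|b\|_\ii$ and dominating the far tail by $\|a\|_1$ and the middle range by $(M-\ep)\,\mu\{|f|>\ep\}$. Both arguments use exactly the same two inputs ($f\in\cal L^1+\cal L^\ii$ and finiteness of the single level set $\{|f|>\ep\}$), and your closing remark correctly identifies why the $\cal L^1+\cal L^\ii$ hypothesis cannot be dropped. The paper's verification is shorter and avoids Fubini/layer-cake machinery; yours makes the quantitative bound $\|g_\ep\|_1\leq\|a\|_1+(M-\ep)_+\,\mu\{|f|>\ep\}$ explicit, which is a small bonus but not needed elsewhere in the paper.
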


\begin{proof} Pick $f\in \cal R_\mu$ and let
\[
\Om_\ep=\{ |f|>\ep\}, \ \ g_\ep=f \,\chi_{\Om_\ep}, \ \ h_\ep=f\,\chi_{\Om \sm \Om_\ep}.
\]
Then $\| h_\ep\|_\ii\leq\ep$;  besides, as $f\in\cal L^1+ \cal L^\ii$, we have
\[
f=g_\ep+h_\ep=g+h
\]
for some $g\in \cal L^1$, $h\in \cal  L^\ii$. Therefore, since $f\in \cal R_\mu$, we have $\mu(\Om_\ep)<\ii$, which implies that
\[
g_\ep=g\,\chi_{\Om_\ep}+(h-h_\ep)\,\chi_{\Om_\ep}\in\cal L^1.
\]

Conversely, let $f\in\cal L^1+\cal L^\ii$, $\lb>0$, and denote $E=\{|f|>\lb\}$. Let $g_{\lb/2}\in\cal L^1$ and
$h_{\lb/2}\in\cal L^\ii$ be such that
\[
f=g_{\lb/2}+h_{\lb/2}\text{ \ \  and \ \ }\| h_{\lb/2}\|_\ii\leq\frac{\lb}2.
\]
Then we have $|f|\chi_E\leq|g_{\lb/2}|\chi_E+|h_{\lb/2}|\chi_E$, implying that
\[
\begin{split}
\mu\{|f|\chi_E>\lb\}&\leq\mu\left\{|g_{\lb/2}|\chi_E>\frac{\lb}2\right\}+\mu\left\{|h_{\lb/2}|\chi_E>\frac{\lb}2\right\}\\
&=\mu\left\{|g_{\lb/2}|\chi_E>\frac{\lb}2\right\}<\ii.
\end{split}
\]
\end{proof}

\begin{pro}\label{p22}
$\cal R_\mu$ is closed with respect to a.u. convergence.
\end{pro}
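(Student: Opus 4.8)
The plan is to verify directly the condition defining $\cal R_\mu$. Let $\{f_\al\}\su\cal R_\mu$ converge a.u.\ to $f$; since $\cal R_\mu\su\cal L^1+\cal L^\ii$ and the closedness is understood within this ambient space, I take $f\in\cal L^1+\cal L^\ii$ as given and reduce the whole statement to showing that $\mu\{|f|>\lb\}<\ii$ for every $\lb>0$. Once this is established, the definition of $\cal R_\mu$ (or, if one prefers to produce the decomposition, Proposition~\ref{p21}) immediately yields $f\in\cal R_\mu$. The governing idea is that a.u.\ convergence controls $f-f_\al$ only off a set of arbitrarily small measure, so I will split each level set $\{|f|>\lb\}$ into its part inside a ``good'' set $G$ and its part in the complement $\Om\sm G$, and bound each piece on its own.

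Fixing $\lb>0$, I would first invoke a.u.\ convergence with $\ep=1$ to obtain a set $G\su\Om$ with $\mu(\Om\sm G)\leq 1$ and $\|(f-f_\al)\chi_G\|_\ii\to 0$, and then select an index $\al$ for which $\|(f-f_\al)\chi_G\|_\ii\leq\lb/2$. On $G$ this gives $|f|\leq|f_\al|+\lb/2$ a.e., so that
\[
\{|f|>\lb\}\cap G\su\{|f_\al|>\lb/2\}.
\]
As $f_\al\in\cal R_\mu$, the set on the right has finite measure, and hence
\[
\mu\{|f|>\lb\}\leq\mu\{|f_\al|>\lb/2\}+\mu(\Om\sm G)<\ii,
\]
which is exactly the distributional finiteness required. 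Note that a single $G$, coming from $\ep=1$, works for all $\lb$ simultaneously, with only the index $\al$ depending on $\lb$.

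The computation is short, and the step deserving attention is the treatment of $\Om\sm G$: a.u.\ convergence yields no control of $f$ there, so the argument must absorb the entire possible contribution of that region into the finite term $\mu(\Om\sm G)$, which is legitimate precisely because $\Om\sm G$ can be taken of small, hence finite, measure. This is also why I keep $f\in\cal L^1+\cal L^\ii$ as the ambient hypothesis rather than trying to derive it: finiteness of the distribution function alone does not force membership in $\cal L^1+\cal L^\ii$, so the closure is naturally read inside that space, and the present argument supplies only the missing distributional condition. Beyond this point I anticipate no further obstacle.
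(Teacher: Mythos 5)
Your argument is correct and essentially the same as the paper's: both intersect the level set $\{|f|>\lb\}$ with the good set supplied by a.u.\ convergence, bound $|f_\al|$ from below on that intersection so that $f_\al\in\cal R_\mu$ gives finite measure, and absorb the complement into its small (hence finite) measure. Your explicit caveat that $f\in\cal L^1+\cal L^\ii$ must be read as part of the ambient hypothesis is a point the paper leaves implicit, but it does not alter the route.
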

\begin{proof}
Let $\cal R_\mu\ni f_\al\to f$ a.u. Fix $\lb>0$ and denote $F=\{|f|>\lb\}$. Let $\ep>0$. Then there is $E\su\Om$ such that
\[
\mu(\Om\sm E)<\ep\text{ \ \ and \ \ } \|(f-f_\al)\chi_E\|_\ii\to 0.
\]
Since $ \|(f-f_\al)\chi_{F\cap E}\|_\ii\to 0$ and
\[
\|(f-f_\al)\chi_{F\cap E}\|_\ii\ge|f\chi_{F\cap E}-f_{\al}\chi_{F\cap E}|\ge \big|\ |f|\chi_{F\cap E}-|f_{\al}|\chi_{F\cap E} \ \big|
\]
it follows from $|f|\chi_{F\cap E}>\lb$ that there exists $\al_0$ such that
$|f_{\al_0}|\chi_{F\cap E}>\lb$. Therefore, as $f_{\al_0}\in\cal R_\mu$, we have $\mu(F\cap E)<\ii$, implying that
$\mu(F)<\ii$.
\end{proof}

\section{Extension of a Dunford-Schwartz operator to  $\cal L^1+\cal L^\ii$}
A linear operator $T: \cal L^1\to \cal L^1$ is called a {\it Dunford-Schwartz operator} (see \cite[Ch.\,VIII, \S\,6]{ds}, \cite{ga}, \cite[Ch.\,4, \S\S\,4.1, 4.2 ]{kr}), whereas we write $T\in DS$, if
\[
\| T(f)\|_1\leq\| f\|_1\, \ \ \forall \ f\in \cal L^1\text{ \ \ and \ \ } \| T(f)\|_\ii\leq\| f\|_\ii\, \ \ \forall \ f\in \cal L^\ii\cap \cal L^1.
\]

Given $\cal L\su\cal L^0$, set $\cal L_+=\{f\in\cal L:\, f\ge 0\}$. If $T\in DS$ is such that $T(\cal L^1_+)\su\cal L^1_+$, then we say that $T$ is positive and write $T\in DS^+$.

We will need the following well-known properties of a bounded linear operator $T:\cal L^1\to \cal L^1$
($T:\cal L^\ii \to \cal L^\ii$)  (see, for example, \cite[Ch.\,4, \S\,4.1, Theorem 1.1, Proposition 1.2\,(d), Theorem 1.3]{kr}).

\begin{pro}\label{p31}
For any bounded linear operator \ $T:\cal L^1\to \cal L^1$ ($T:\cal L^\ii \to \cal L^\ii$) there exists a unique positive
bounded linear operator $|T|:\cal L^1\to \cal L^1$  (respectively, $|T|:\cal L^\ii \to \cal L^\ii$) such that
\begin{enumerate} [(i)]
\item $ \|\,|T|\,\| = \| T\|$;

\item  $|T^k(f)|\leq |T|^k(|f|)$, $k=1,2, \dots$, $\forall \ f\in \cal L^1$ (respectively, $\forall \ f\in\cal L^\ii$);

\item  $|T^*|=|T|^*$, where $T^*: \cal L^\ii \to \cal L^\ii$ is the adjoint operator of an operator $T:\cal L^1\to \cal L^1$.
\end{enumerate}
\end{pro}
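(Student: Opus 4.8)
The plan is to realize $|T|$ as the \emph{modulus} of $T$ from the theory of operators on Banach lattices, built from the Riesz--Kantorovich formula. Both $\cal L^1$ and $\cal L^\ii$ are Dedekind complete Banach lattices (for $\cal L^\ii$ this uses that a $\sg$\,-\,finite measure is localizable); $\cal L^1$ carries an order continuous norm that is additive on $\cal L^1_+$ (it is an abstract $L$-space), while $\cal L^\ii$ has order unit $\mb 1$. For $f\ge 0$ in the relevant space I would set
\[
|T|(f)=\sup\big\{\,|T(g)|:\ |g|\le f\,\big\},
\]
the supremum ranging over all (complex) $g$ with lattice modulus $|g|\le f$. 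Once this supremum is shown to exist in the codomain, a routine check that $f\mapsto|T|(f)$ is additive and positively homogeneous on the positive cone lets me extend $|T|$ to a positive linear operator on the whole space via the usual splitting into real and imaginary, positive and negative parts; this extended operator is the candidate for $|T|$.

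Existence of the supremum amounts to \emph{order boundedness} of $T$, and here the two cases diverge. For $T:\cal L^\ii\to\cal L^\ii$, if $|g|\le f$ then $\|g\|_\ii\le c:=\|f\|_\ii$, so $|T(g)|\le\|T(g)\|_\ii\,\mb 1\le\|T\|\,c\,\mb 1$; thus $\{|T(g)|:|g|\le f\}$ is order bounded, Dedekind completeness of $\cal L^\ii$ produces the supremum, and $|T|(f)\le\|T\|\,c\,\mb 1$ gives $\||T|\|\le\|T\|$. For $T:\cal L^1\to\cal L^1$ the unit ball is not order bounded, and existence is the nontrivial point: it is Kantorovich's theorem that every bounded operator on an abstract $L$-space is regular with $\|T\|_r=\|T\|$, the structural input being additivity of the norm on $\cal L^1_+$ together with order continuity of the $\cal L^1$-norm. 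In both cases the reverse inequality $\|T\|\le\||T|\|$ is immediate from $|T(f)|\le|T|(|f|)$ (the case $k=1$ of (ii)), so (i) holds.

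Granting the construction, the remaining assertions are comparatively direct. For (ii), the case $k=1$, namely $|T(f)|\le|T|(|f|)$, is immediate from the defining supremum, and the general case follows by induction from positivity (hence monotonicity) of $|T|$:
\[
|T^{k+1}(f)|=|T(T^k(f))|\le|T|\big(|T^k(f)|\big)\le|T|\big(|T|^k(|f|)\big)=|T|^{k+1}(|f|).
\]
For (iii) I would pair the Riesz--Kantorovich formulas for $T$ and $T^*$ against the $\cal L^1$--$\cal L^\ii$ duality; because the $\cal L^1$-norm is order continuous, the Banach-space adjoint agrees with the order adjoint and taking moduli commutes with passage to the adjoint, which is exactly $|T^*|=|T|^*$. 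Uniqueness is built into the construction: the displayed formula determines $|T|$ on the positive cone and hence everywhere, and it characterizes $|T|$ as the smallest positive operator $S$ with $|T(f)|\le S(|f|)$ for all $f$, so no competing positive operator can enjoy all three properties.

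The one genuinely nontrivial step, and the main obstacle, is the existence of the supremum for $T:\cal L^1\to\cal L^1$, that is, the order boundedness of a merely norm-bounded operator whose domain has a ball that is not order bounded; this rests on the abstract $L$-space structure rather than on any soft manipulation, and it is precisely the fact recorded in the cited sources. The $\cal L^\ii$ case, the induction for (ii), the duality behind (iii), and the uniqueness claim are then routine.
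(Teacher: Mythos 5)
The paper gives no proof of Proposition \ref{p31} at all --- it is quoted verbatim from the cited source (Krengel, Ch.\,4, \S\,4.1), and your proposal reconstructs exactly the standard Chacon--Krengel/Riesz--Kantorovich construction found there: the modulus via $|T|(f)=\sup\{|T(g)|:|g|\le f\}$, order boundedness from the order unit in $\cal L^\ii$ and from the $L$-space structure (additivity of the norm on the positive cone) in $\cal L^1$, the induction for (ii), and duality for (iii). Your account is correct in substance and correctly isolates the one nontrivial point (order boundedness on $\cal L^1$), so it matches the approach the paper relies on; the only places where you lean on ``routine'' are the complex-scalar version of the Riesz--Kantorovich additivity check and the interchange of supremum and integration in (iii), both of which are indeed standard.
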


The operator $|T|$ is called the {\it linear modulus} of $T$.

We will also utilize the next fact, which can be found, for example, in \cite[Corollary 2.9]{pa}.
\begin{teo}\label{t30}
Let $\cal A$ and $\cal B$ be $C^*$\,-\,algebras with unit $\mb 1$, and let $T:\cal A\to\cal B$ be a positive linear map. Then $\|T\|=\|T(\mb 1)\|$.
\end{teo}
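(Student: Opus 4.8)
The plan is to establish the two inequalities separately, the easy one being $\|T(\mb 1)\|\le\|T\|$, which is immediate from $\|\mb 1\|=1$. For the reverse inequality I would first record that a positive linear map is $*$-preserving, i.e. $T(a^*)=T(a)^*$: indeed $\cal A$ is the linear span of its positive (hence self-adjoint) elements, on which $T$ takes self-adjoint values, and $*$-preservation for general $x=a+ib$ follows by linearity. The conceptual core is then the self-adjoint case. If $a=a^*$ with $\|a\|\le 1$, then $\mb 1\pm a\ge 0$ in $\cal A$, so positivity of $T$ gives $-T(\mb 1)\le T(a)\le T(\mb 1)$; since $T(a)$ is self-adjoint and $0\le T(\mb 1)\le\|T(\mb 1)\|\,\mb 1$, the resulting order relation $-\|T(\mb 1)\|\,\mb 1\le T(a)\le\|T(\mb 1)\|\,\mb 1$ yields $\|T(a)\|\le\|T(\mb 1)\|$. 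In particular $T$ is bounded.

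The main obstacle is passing from self-adjoint elements to an arbitrary $x\in\cal A$ with the \emph{sharp} constant: splitting $x$ into real and imaginary parts and applying the self-adjoint bound only gives $\|T(x)\|\le 2\|T(\mb 1)\|$, so a genuinely finer argument is required to remove the factor $2$. I would do this in two moves. First, invoke the Russo--Dye theorem, that the closed unit ball of a unital $C^*$-algebra is the closed convex hull of its unitary group; since $T$ is bounded and $x\mapsto\|T(x)\|$ is convex and continuous, this forces $\|T\|=\sup\{\|T(u)\|:\,u\in\cal A\text{ unitary}\}$. Second, bound $\|T(u)\|$ for a unitary $u$ via Kadison's generalized Schwarz inequality, which is applicable precisely because a unitary is a normal element: $T(u)^*T(u)\le\|T(\mb 1)\|\,T(u^*u)=\|T(\mb 1)\|\,T(\mb 1)$. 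Taking norms gives $\|T(u)\|^2=\|T(u)^*T(u)\|\le\|T(\mb 1)\|^2$, hence $\|T(u)\|\le\|T(\mb 1)\|$ for every unitary $u$.

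Combining the last two facts yields $\|T\|\le\|T(\mb 1)\|$, which together with the trivial inequality completes the proof. I expect the delicate point to be exactly this non-self-adjoint step: the order argument is elementary and self-contained, but the emergence of the optimal constant $1$ rather than $2$ rests on the two nontrivial inputs above. It is worth emphasizing that the Kadison--Schwarz inequality $T(a)^*T(a)\le\|T(\mb 1)\|\,T(a^*a)$ is used here only for the normal element $u$; for general $a$ it may fail for a merely positive (not $2$-positive) map, which is why the reduction to unitaries through Russo--Dye is essential and not merely a convenience.
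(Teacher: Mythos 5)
The paper does not prove this statement at all: it is quoted as a known fact with a pointer to Paulsen's book (Corollary 2.9 of \cite{pa}), so there is no in-paper argument to compare against. Judged on its own, your proof is correct, and it is essentially the standard proof of that corollary. The easy inequality, the $*$-preservation, and the order argument giving $\|T(a)\|\leq\|T(\mb 1)\|$ for self-adjoint $a$ (hence boundedness with constant $2$) are all fine, and you correctly identify that the whole content is in removing the factor $2$. Your two inputs are legitimate and correctly applied: the Russo--Dye theorem reduces the problem to unitaries (using that $T$ is already known to be bounded, so the supremum passes to the closed convex hull), and Choi's generalized Schwarz inequality $T(a)^*T(a)\leq\|T(\mb 1)\|\,T(a^*a)$ does hold for \emph{normal} $a$ under mere positivity, so applying it to a unitary and using the $C^*$-identity gives $\|T(u)\|\leq\|T(\mb 1)\|$. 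The only difference from the proof in the cited reference is this last step: Paulsen instead restricts $T$ to the commutative unital $C^*$-subalgebra generated by $u$, where a positive map attains its norm at the unit, which avoids invoking the Schwarz inequality; your route trades that elementary commutative reduction for a slightly heavier but equally standard theorem. Your closing caveat --- that the Schwarz inequality can fail for non-normal elements without $2$-positivity, which is exactly why the detour through unitaries is needed --- is accurate and well placed.
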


In what follows, we denote $\mb 1=\chi_\Om$.
\begin{teo}\label{t31}
For any  Dunford-Schwartz operator\; $T: \cal L^1\to \cal L^1$ there exists a unique linear operator\;
$\wt T:\cal L^1+\cal L^\ii\to \cal L^1+ \cal L^\ii$ such that
\[
\wt T(f)=T(f) \ \ \forall \ f\in \cal L^1, \ \ \ \|\wt T(f)\|_\ii\leq \|f\|_\ii \ \ \forall \ f\in \cal L^\ii,
\]
and $\wt T|_{\cal L^\ii}$ is $\sg(\cal L^\ii,\cal L^1)$\,-\,continuous.
\end{teo}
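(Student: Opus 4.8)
The plan is to reduce everything to the construction of the restriction $\wt T|_{\cal L^\ii}$, since once a $\sg(\cal L^\ii,\cal L^1)$-continuous $\cal L^\ii$-contraction extending $T|_{\cal L^1\cap\cal L^\ii}$ is in hand, the operator on $\cal L^1+\cal L^\ii$ is forced: one sets $\wt T(g+h)=T(g)+\wt T(h)$ for $g\in\cal L^1$, $h\in\cal L^\ii$, and this is consistent precisely because any two decompositions of the same element differ by a function in $\cal L^1\cap\cal L^\ii$, on which $T$ and the $\cal L^\ii$-part agree. The heart of the matter is therefore to produce a weak-* continuous contraction on $\cal L^\ii$ that coincides with $T$ on $\cal L^1\cap\cal L^\ii$, and to see that such an operator is unique.

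For existence I would build $\wt T|_{\cal L^\ii}$ as an adjoint, which delivers weak-* continuity and the contraction bound automatically. Let $T^*:\cal L^\ii\to\cal L^\ii$ be the Banach-space adjoint of $T:\cal L^1\to\cal L^1$, so $\int (T^*\phi)\,f\,d\mu=\int\phi\,(Tf)\,d\mu$ for $\phi\in\cal L^\ii$, $f\in\cal L^1$. Using the two Dunford-Schwartz inequalities dually, one checks that $T^*$ maps $\cal L^1\cap\cal L^\ii$ into itself and satisfies $\|T^*\phi\|_1\leq\|\phi\|_1$ there: for a simple $\psi$ supported on a set of finite measure with $\|\psi\|_\ii\leq 1$, the identity $\int(T^*\phi)\psi\,d\mu=\int\phi\,(T\psi)\,d\mu$ and $\|T\psi\|_\ii\leq\|\psi\|_\ii$ give $|\int(T^*\phi)\psi\,d\mu|\leq\|\phi\|_1$, and taking the supremum over such $\psi$ (legitimate by $\sg$-finiteness) yields the claim. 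Hence $T^*|_{\cal L^1\cap\cal L^\ii}$ extends by density to a bounded operator $S:\cal L^1\to\cal L^1$ with $\|S\|\leq 1$, and I define $\wt T|_{\cal L^\ii}:=S^*$. Being an adjoint, $S^*$ is $\sg(\cal L^\ii,\cal L^1)$-continuous with $\|S^*\|=\|S\|\leq 1$, so $\|\wt T(h)\|_\ii\leq\|h\|_\ii$.

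It remains to check that $S^*$ agrees with $T$ on $\cal L^1\cap\cal L^\ii$. For $h,f\in\cal L^1\cap\cal L^\ii$ one computes $\int (S^*h)\,f\,d\mu=\int h\,(Sf)\,d\mu=\int h\,(T^*f)\,d\mu=\int (Th)\,f\,d\mu$, the last step again being the defining relation for $T^*$; since $S^*h$ and $Th$ lie in $\cal L^\ii$, density of $\cal L^1\cap\cal L^\ii$ in $\cal L^1$ promotes this to all $f\in\cal L^1$, whence $S^*h=Th$. If one prefers to avoid the adjoint for the norm bound, the contraction property can instead be obtained from positivity: dominating $\wt T$ by $\wt{|T|}$ through Proposition \ref{p31}(ii), reducing to $T\in DS^+$, and applying Theorem \ref{t30} to the positive map $\wt{|T|}$ on the commutative $C^*$-algebra $\cal L^\ii$, where $\|\wt{|T|}\|=\|\wt{|T|}(\mb 1)\|_\ii\leq 1$ because $\wt{|T|}(\mb 1)$ is a monotone limit of $|T|(\chi_{\Om_n})$ along an exhaustion $\Om_n\uparrow\Om$ with $\||T|(\chi_{\Om_n})\|_\ii\leq 1$.

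Finally, uniqueness and the main obstacle. Two operators satisfying the hypotheses agree on $\cal L^1$, hence on $\cal L^1\cap\cal L^\ii$; since $\cal L^1\cap\cal L^\ii$ is $\sg(\cal L^\ii,\cal L^1)$-dense in $\cal L^\ii$ (again by $\sg$-finiteness) and both restrictions to $\cal L^\ii$ are weak-* continuous, they coincide on $\cal L^\ii$, and linearity extends the agreement to $\cal L^1+\cal L^\ii$. I expect the genuine obstacle to be the weak-* continuity requirement: a naive extension of $T$ to $\cal L^\ii$ by monotone limits along an exhaustion is easy to define but awkward to show weak-* continuous directly, and it is exactly this difficulty that the realization $\wt T|_{\cal L^\ii}=S^*$ as an adjoint is designed to bypass. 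The only remaining points needing care are the dual self-improvement of the two Dunford-Schwartz inequalities for $T^*$ and the consistency of the gluing on the overlap $\cal L^1\cap\cal L^\ii$.
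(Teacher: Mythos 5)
Your proof is correct, and its skeleton --- extend $T^*|_{\cal L^1\cap\cal L^\ii}$ by $\|\cdot\|_1$-density to an operator $S$ on $\cal L^1$ and set $\wt T|_{\cal L^\ii}=S^*$, then glue with $T$ on $\cal L^1$ along the overlap --- is exactly the paper's construction (there $S$ is called $\wh{T^*}$). Where you genuinely diverge is in the two norm estimates, and your route is the more elementary one. For the bound $\|T^*\phi\|_1\leq\|\phi\|_1$ on $\cal L^1\cap\cal L^\ii$ the paper first reduces to $T\in DS^+$, proves the estimate there by testing $T^*(f)\ge 0$ against an exhaustion $\chi_{F_n}$, and then handles general $T$ through the linear modulus $|T|$ and Proposition \ref{p31}; your duality argument --- pairing $T^*\phi$ against simple $\psi$ with $\|\psi\|_\ii\leq 1$ and finite-measure support and using $\|T\psi\|_\ii\leq\|\psi\|_\ii$ --- treats arbitrary $T\in DS$ in one stroke and makes the whole positive/modulus reduction unnecessary. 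Likewise, for the $\cal L^\ii$-contraction bound the paper invokes Theorem \ref{t30} (Paulsen) applied to the positive map $\wt T$ together with $\wt T(\mb 1)\leq\mb 1$, and for non-positive $T$ again routes through $|\wh{T^*}|=\wh{|T|^*}$; you simply observe $\|S^*\|_{\cal L^\ii\to\cal L^\ii}=\|S\|_{\cal L^1\to\cal L^1}\leq 1$, which is immediate for a Banach-space adjoint. The alternative you sketch at the end (positivity plus Theorem \ref{t30}) is essentially the paper's actual argument, so you have both routes in hand; what your version buys is a shorter proof with no appeal to the linear modulus or to the $C^*$-algebra lemma, while the paper's version buys the extra structural information that $\wt T$ is built from positive pieces. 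Your uniqueness argument (weak-$*$ density of $\cal L^1\cap\cal L^\ii$ in $\cal L^\ii$ plus weak-$*$ continuity) and the consistency check for the gluing are the right ones and are only implicit in the paper.
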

\begin{proof}
Assume first that  $T\in DS^+$. Since $ (\cal L^1)^*=\cal L^\ii$, the adjoint operator $T^*$ acts in  $\cal L^\ii$ and is
$\sg(\cal L^\ii,\cal L^1)$\,-\,continuous. Moreover, since
\[
\int_\Om T^*(f)g\,d\mu=\int_\Om fT(g)\,d\mu\ \ \ \forall \ f\in \cal L^\ii, \ g\in \cal L^1,
\]
it follows that the linear operator $T^*$ is positive.

Choose $F_n\su\Om$, $n=1,2,\dots$, satisfying
\[
F_n\su F_{n+1}, \ \ \mu(F_n)<\ii\, \ \ \forall \ n\in\mathbb N \text{ \ \ and \ \ }\bigcup\limits_{n=1}^\ii F_n=\Om.
\]
As $0\leq T(\chi_{F_n}) \leq\mathbf 1$  for each $n$, given $f\in\cal L^1\cap\cal L^\ii_+$, it follows that
\[
\begin{split}
\|T^*(f)\|_1&=\int_\Om T^*(f)\,d\mu=\lim_{n\to\ii}\int_\Om T^*(f)\chi_{F_n}d\mu\\
&=\lim_{n\to\ii}\int_\Om fT(\chi_{F_n})\,d\mu\leq\int_\Om fd\mu=\|f\|_1.
\end{split}
\]
Therefore, \,$T^*$ is $\| \cdot \|_1$\,-\,continuous on $\cal L^1\cap\cal L^\ii_+$, hence on $\cal L^1\cap\cal L^\ii$. Since $\cal L^1\cap\cal L^\ii$ is dense in $\cal L^1$, $T^*$ uniquely extends to a positive linear $\| \cdot \|_1$\,-\,continuous operator $\wh{T^*}: \cal L^1\to \cal L^1$.

Next, replacing in the above argument $T$ by $\wh{T^*}$, we uniquely extend the operator  $(\wh{T^*})^*|_{\cal L^1\cap\cal L^\ii}: \cal L^1\cap\cal L^\ii \to \cal L^1\cap\cal L^\ii$ to a positive $\|\cdot\|_1$\,-\,continuous linear operator $\wt T:\cal L^1\to\cal L^1$. Since
\[
\int_\Om f (\wh{T^*})^*(g) d \mu = \int_\Om \wh{T^*}(f) g d \mu=\int_\Om T^*(f) g d \mu = \int_\Om f T(g) d \mu \ \ \ \forall\, \ f, g \in \cal L^1\cap\cal L^\ii,
\]
it follows that $\wt T(f)=(\wh{T^*})^*(f)  = T(f)$ for all $f\in \cal L^1\cap\cal L^\ii$. Consequently, $\wt T$ coincides with $T$ on $\cal L^1$.

Furthermore, as $\wt T|_{\cal L^\ii \cap\cal L^1} = (\wh{T^*})^*|_{\cal L^\ii \cap \cal L^1}$ is
$\sg(\cal L^\ii,\cal L^1)$\,-\,continuous and $\cal L^1\cap\cal L^\ii$ is $\sg(\cal L^\ii,\cal L^1)$\,-\,dense in $\cal L^\ii$,
$\wt T|_{\cal L^1\cap\cal L^\ii}$ uniquely extends to an operator on $\cal L^\ii$ which coincides with $(\wh{T^*})^*:\cal L^\ii\to\cal L^\ii$.

Let us now show that $\|\wt T\|_{\cal L^\ii\to\cal L^\ii}\leq 1$.  Indeed, given $f\in\cal L^1\cap\cal L^\ii_+$, we have
\[
\int_\Om f\wt T(\mb 1)d\mu=\int_\Om f(\wh{T^*})^*(\mb 1)d\mu=\int_\Om\wh{T^*}(f)d\mu=\int_\Om T^*(f)d\mu
\leq \int_\Om fd\mu,
\]
and we conclude that $\wt T(\mb 1)\leq\mb 1$, hence $\|\wt T(\mb 1)\|_\ii\leq 1$. Therefore, in view of Theorem \ref{t30} with $\cal A=\cal B=\cal L^\ii$, we have
\[
\|\wt T\|_{\cal L^\ii\to\cal L^\ii}=\|\wt T(\mb 1)\|_\ii\leq 1.
\]
This completes the proof of the theorem in the case $T\in DS^+$, since the operator $\wt T: \cal L^1+\cal L^\ii\to\cal L^1+\cal L^\ii$ defined by
\[
\wt T(f)=T(f) \ \ \forall \ f\in\cal L^1,\ \ \ \wt T(g)=(\wh{T^*})^*(g) \ \ \forall \ \ g\in\cal L^\ii
\]
satisfies the required conditions.

Let now $T\in DS$. Since $|T|\in DS^+$, it follows as above that $|T|^*:\cal L^\ii\to\cal L^\ii$ uniquely extends to a positive continuous linear operator $\wh{|T|^*}: \cal L^1\to\cal L^1$ and, since, by Proposition \ref{p31},
\[
\|T^* f\|_1 \leq  \| |T^*| (f)\|_1=\| |T|^* (f)\|_1=\| \wh{|T|^*}(f)\|_1 \, \ \ \forall \, \ f \in\cal L^1\cap\cal L^\ii_+,
\]
$T^*$ is $\|\cdot\|_1$\,-\,continuous on $\cal L^1\cap\cal L^\ii$. Therefore, $T^*$ admits a unique $\|\cdot\|_1$\,-\,continuous extension $\wh{T^*}$ to
$\cal L^1$, implying as above that $\wt T=(\wh{T^*})^*$ is the unique extension of $T$ to $\cal L^\ii$.

Next, $\wh{T^*}(f)=T^*(f)$  for all $f\in\cal L^1\cap\cal L^\ii$ implies that
\[
|\wh{T^*}|(f)=|T^*|(f)=|T|^*(f)=\wh{|T|^*}(f), \ \ f\in\cal L^1\cap\cal L^\ii,
\]
hence $|\wh{T^*}|(g)=\wh{|T|^*}(g)$ for all $g\in\cal L^\ii$, since $|T|^*$ is $\sg(\cal L^\ii,\cal L^1)$\,-\,continuous on $\cal L^1\cap\cal L^\ii$. Since, as above, we have
\[
\|\,(\wh{|T|^*})^*\,\|_{\cal L^\ii\to\cal L^\ii}\leq 1,
\]
it now follows by Proposition \ref{p31} that
\[
\begin{split}
\|\wt T\|_{\cal L^\ii\to\cal L^\ii}&=\|(\wh{T^*})^*\|_{\cal L^\ii\to\cal L^\ii}=\|\,|(\wh{T^*})^*|\,\|_{\cal L^\ii\to\cal L^\ii}\\
&=\|\,|\wh{T^*}|^*\|_{\cal L^\ii\to\cal L^\ii}=\|\,(\wh{|T|^*})^*\,\|_{\cal L^\ii\to\cal L^\ii}\leq 1,
\end{split}
\]
completing the proof.
\end{proof}

\begin{rem}
Theorem \ref{t31} implies that one can  (and we will in what follows) assume without loss of generality that any $T\in DS$ is defined on entire space $\cal L^1+\cal L^\ii$ and satisfies conditions
\begin{equation}\label{e31}
\| T(f)\|_1\leq\| f\|_1\, \ \ \forall \ \,f\in\cal L^1\text{ \ \ and \ \ } \| T(f)\|_\ii\leq\| f\|_\ii\, \ \ \forall \ \,f\in\cal L^\ii.
\end{equation}

\end{rem}

\section{Almost uniform convergence of Besicovitch weighted averages}

In this section we will show that pointwise convergence of Besicovitch weighted ergodic
averages (see, for example, \cite{clo}) can be extended to the context of a.u. convergence and a Dunford-Schwartz operator acting in $\cal R_\mu$ (Theorem 1.4 below).

Let $\Bbb C_1$ be the unit circle in the field $\mathbb C$  of complex numbers, and let $\mathbb Z$ be the set of integers. A function $P : \mathbb Z \to \mathbb C$ is said to be a {\it trigonometric polynomial} if
$P(k)=\sum\limits_{j=1}^{s} z_j\lb_j^k$, $k\in \mathbb Z$, for some $s\in \mathbb N$, $\{ z_j \}_1^s \subset \mathbb C$, and $\{ \lb_j \}_1^s \subset \mathbb C_1$.
A sequence $\{ \beta_k \} \subset \Bbb C$ is called a {\it bounded Besicovitch sequence} if

(i) $| \beta_k | \leq C<\ii$ for all $k \in \mathbb N$ and some $C>0$;

(ii) for every $\ep >0$ there exists a trigonometric polynomial $P$ such that
\begin{equation}\label{e41}
\limsup_n \frac 1n \sum_{k=0}^{n-1} | \beta_k - P(k) |<\ep .
\end{equation}

Let $E$ be a Banach space, and let $A_n: E \to \cal L^0$ be a sequence of linear maps. Given $f\in E$, the function
\[
A^*(f)=\sup_n|A_n(f)|
\]
is called the {\it maximal function} of $f$. If $A^*(f)\in\cal L^0$ for every $f\in E$, then the function
\[
A^*: E\to \cal L^0, \ \ f\in E,
\]
is called the {\it maximal operator} of the sequence $\{A_n\}$. 

Here is the well-known maximal ergodic inequality for the sequence $\{A_n(T)\}$, $T\in DS$ (see,  for example,  \cite[Theorem 3.3]{cl1}):

\begin{teo}\label{t41}
Let $T\in DS$. If
\[
A(T)^*(f)=\sup\limits_n\left|A_n(T)(f)\right|, \ \ f \in \cal L^1,
\]
the maximal operator of the sequence $\{A_n(T)\}$ on $E=\cal L^1$, then
\[
\mu\{A(T)^*(|f|)>\lb\}\leq\frac{\|f\|_1}\lb \text{ \ \ for all \ } f\in \cal L^1, \ \lb>0.
\]
\end{teo}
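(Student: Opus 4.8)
The plan is to reduce the inequality to a \emph{positive} Dunford-Schwartz operator via the linear modulus, and then prove the resulting statement by the Hopf maximal ergodic argument, with extra care because $\mb 1\notin\cal L^1$. Throughout write $A_n(T)(f)=\frac1n\sum_{k=0}^{n-1}T^k(f)$ and $A(T)^*(f)=\sup_n|A_n(T)(f)|$. First I would replace $T$ by its linear modulus $|T|\in DS^+$ from Proposition \ref{p31}. Since $\big|A_n(T)(|f|)\big|=\big|\frac1n\sum_{k=0}^{n-1}T^k(|f|)\big|\leq\frac1n\sum_{k=0}^{n-1}|T|^k(|f|)=A_n(|T|)(|f|)$ by Proposition \ref{p31}(ii), we obtain the pointwise bound $A(T)^*(|f|)\leq A(|T|)^*(|f|)$. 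Hence it suffices to prove the inequality for a positive $S=|T|\in DS^+$ and $g=|f|\geq 0$, namely $\mu\{\sup_n A_n(S)(g)>\lb\}\leq\|g\|_1/\lb$, and the general case then follows at once since $\|\,|f|\,\|_1=\|f\|_1$.

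Fix $\lb>0$ and set $S_n(g)=\sum_{k=0}^{n-1}S^k(g)$, so that $A_n(S)(g)=\frac1n S_n(g)$. By Theorem \ref{t31} the operator $S$ acts on $\cal L^1+\cal L^\ii$, and since $S\in DS^+$ is a positive $\cal L^\ii$\,-\,contraction we have $0\leq S(\mb 1)\leq\mb 1$, hence $S^k(\mb 1)\leq\mb 1$ for all $k$. Put $f=g-\lb\mb 1\in\cal L^1+\cal L^\ii$ and $f_n=\sum_{k=0}^{n-1}S^k(f)=S_n(g)-\lb\sum_{k=0}^{n-1}S^k(\mb 1)$. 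Because $0\leq\sum_{k=0}^{n-1}S^k(\mb 1)\leq n\mb 1$, we have both $f_n\leq S_n(g)$ and $\{S_n(g)>\lb n\}\su\{f_n>0\}$, so that $\{\sup_n A_n(S)(g)>\lb\}=\bigcup_n\{S_n(g)>\lb n\}\su\bigcup_n\{f_n>0\}$. It therefore remains to estimate $\mu\big(\bigcup_n\{f_n>0\}\big)$.

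For the Hopf step, fix $N$, let $f_0=0$ and $M_N=\max_{0\leq n\leq N}f_n$. Using $f_n=f+S(f_{n-1})$, positivity of $S$, and $f_{n-1}\leq M_N^+$, one gets $f_n\leq f+S(M_N^+)$ for $1\leq n\leq N$, whence on $\{M_N>0\}$ (where $M_N=\max_{1\leq n\leq N}f_n=M_N^+$) the pointwise inequality $f\geq M_N^+-S(M_N^+)$. Since $f_n\leq S_n(g)\in\cal L^1$ and $g\geq0$, the function $M_N^+$ satisfies $0\leq M_N^+\leq\sum_{n=1}^N S_n(g)\in\cal L^1$, so $M_N^+,S(M_N^+)\in\cal L^1$; as $M_N^+$ vanishes off $\{M_N>0\}$ and $\|S(M_N^+)\|_1\leq\|M_N^+\|_1$, we get $\int_{\{M_N>0\}}(M_N^+-S(M_N^+))\,d\mu\geq0$. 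Moreover $f\geq -S(M_N^+)$ on $\{M_N>0\}$ shows the negative part of $f$ is integrable there, so $\int_{\{M_N>0\}}f\,d\mu$ is well defined and $\geq0$, i.e. $\lb\,\mu\{M_N>0\}\leq\int_{\{M_N>0\}}g\,d\mu\leq\|g\|_1$. Letting $N\to\ii$ and using $\{M_N>0\}\uparrow\bigcup_n\{f_n>0\}$ yields $\mu\big(\bigcup_n\{f_n>0\}\big)\leq\|g\|_1/\lb$, which combined with the containment above and the reduction of the first paragraph proves the theorem.

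The main obstacle is precisely the infinite measure. In the finite-measure proof one subtracts $\lb\mb 1\in\cal L^1$ and applies Hopf's lemma directly, but here $\mb 1\notin\cal L^1$, so $f=g-\lb\mb 1$ lives only in $\cal L^1+\cal L^\ii$. The extension of Theorem \ref{t31} is what lets $S$ act on $f$, while the two facts $S(\mb 1)\leq\mb 1$ and $f_n\leq S_n(g)\in\cal L^1$ are exactly what keep the genuinely relevant quantities $M_N^+$ and $S(M_N^+)$ integrable and make $\int_{\{M_N>0\}}f\,d\mu$ meaningful despite $f\notin\cal L^1$; verifying these integrability points carefully is the only delicate part of the argument.
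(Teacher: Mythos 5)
Your proof is correct. Note, however, that the paper does not actually prove Theorem \ref{t41}: it is quoted as a known maximal ergodic inequality with a citation to \cite[Theorem 3.3]{cl1}, so there is no in-text argument to compare against. What you supply is a self-contained proof along the classical Hopf--Garsia lines: reduction to $S=|T|\in DS^+$ via Proposition \ref{p31}(ii), the substitution $g-\lb\mb 1$ (which forces you out of $\cal L^1$ into $\cal L^1+\cal L^\ii$, legitimized by Theorem \ref{t31} and the bound $S^k(\mb 1)\leq\mb 1$), and Hopf's lemma applied to $M_N^+$. The delicate points are exactly the ones you flag and they all check out: $M_N^+\in\cal L^1$ because $0\leq M_N^+\leq\sum_{n=1}^N S_n(g)$, the integral $\int_{\{M_N>0\}}f\,d\mu$ is well defined because the positive part of $g-\lb\mb 1$ is dominated by $g$ and its negative part on $\{M_N>0\}$ by $S(M_N^+)$, and finiteness of $\mu\{M_N>0\}$ comes out of the same inequality. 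The only blemish is cosmetic: you recycle the letter $f$ for $g-\lb\mb 1$ after it has already denoted the function in the statement of the theorem; renaming it (say $h=g-\lb\mb 1$) would avoid confusion but changes nothing mathematically.
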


Given $T\in DS$, $\{\bt_k\}\su \mathbb C$, and $f\in\cal L^1+\cal L^\ii$, denote
\begin{equation}\label{e42}
B_n(f)=B_n(T)(f)=\frac1n\sum_{k=0}^{n-1}\bt_kT^k(f).
\end{equation}

\begin{cor}\label{c41}
Let $\{\bt_k\}\su\mathbb C$ be such that $|\bt_k|\leq C<\ii$ for every $k$. If $T\in DS$, then
\[
\mu\{B_n(T)^*(|f|)>\lb\}\leq 6C\frac{\|f\|_1}\lb \ \ \ \forall \ \,f\in \cal L^1, \ \lb>0.
\]
\end{cor}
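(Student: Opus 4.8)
The plan is to dominate the weighted maximal function $B_n(T)^*(|f|)$ pointwise by a constant multiple of an \emph{unweighted} maximal function to which Theorem \ref{t41} applies, and then invoke that theorem. First I would use only the uniform bound on the weights: since $|\bt_k|\leq C$ for all $k$, the triangle inequality gives, for every $n$,
\[
|B_n(T)(|f|)|\leq\frac1n\sum_{k=0}^{n-1}|\bt_k|\,|T^k(|f|)|\leq\frac Cn\sum_{k=0}^{n-1}|T^k(|f|)|.
\]
Thus the weights are absorbed into the constant $C$, leaving an average of the moduli $|T^k(|f|)|$.

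The key step will be to pass from $T$ to its linear modulus $|T|$. By Proposition \ref{p31}(ii) one has $|T^k(|f|)|\leq|T|^k(|f|)$ for every $k$, so that
\[
\frac Cn\sum_{k=0}^{n-1}|T^k(|f|)|\leq\frac Cn\sum_{k=0}^{n-1}|T|^k(|f|)=C\,A_n(|T|)(|f|).
\]
Taking the supremum over $n$ yields the pointwise domination $B_n(T)^*(|f|)\leq C\,A(|T|)^*(|f|)$. Here it is essential that $|T|\in DS^+$, so that each $A_n(|T|)(|f|)$ is nonnegative and the comparison with the maximal operator of the positive operator $|T|$ is legitimate.

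Finally, since $|T|$ is itself a Dunford-Schwartz operator, Theorem \ref{t41} applied to $|T|$ gives $\mu\{A(|T|)^*(|f|)>\lb\}\leq\|f\|_1/\lb$ for all $\lb>0$. Combining this with the set inclusion $\{B_n(T)^*(|f|)>\lb\}\su\{A(|T|)^*(|f|)>\lb/C\}$ produces a bound of the asserted form. In fact this route yields the smaller constant $C$; the stated factor $6C$ more than suffices, and it also accommodates the alternative argument in which one first splits $\bt_k$ into the positive and negative parts of its real and imaginary components, obtaining four nonnegative sequences each bounded by $C$, and applies Theorem \ref{t41} to each.

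The only genuine obstacle is that $T^k(|f|)$ is complex-valued, so the average $\frac1n\sum_{k=0}^{n-1}|T^k(|f|)|$ cannot be compared directly with the complex Cesàro average $A(T)^*(|f|)=\sup_n|\frac1n\sum_{k=0}^{n-1}T^k(|f|)|$ that appears in Theorem \ref{t41}: passing the absolute value inside the sum destroys the cancellation on which the complex average relies. Proposition \ref{p31}(ii) is precisely the tool that resolves this, dominating each $|T^k(|f|)|$ by the nonnegative quantity $|T|^k(|f|)$ and thereby reducing the weighted estimate to the unweighted maximal inequality for the positive operator $|T|$.
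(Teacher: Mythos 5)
Your proof is correct, and its core reduction is the same as the paper's: dominate the weighted maximal function pointwise by a multiple of $A(|T|)^*(|f|)$ using Proposition \ref{p31}(ii), then apply Theorem \ref{t41} to the positive Dunford--Schwartz operator $|T|$. Where you differ is the first step. The paper writes
\[
B_n(T)=\frac1n\sum_{k=0}^{n-1}(\operatorname{Re}\bt_k+C)T^k+\frac{i}{n}\sum_{k=0}^{n-1}(\operatorname{Im}\bt_k+C)T^k-C(1+i)A_n(T),
\]
uses $0\leq\operatorname{Re}\bt_k+C\leq 2C$ and $0\leq\operatorname{Im}\bt_k+C\leq 2C$, and then passes to $|T|$ term by term, arriving at $|B_n(T)(g)|\leq 6C\,A_n(|T|)(|g|)$; the constant $6C$ in the statement is just what this decomposition produces. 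You instead apply the triangle inequality directly, $|B_n(T)(|f|)|\leq\frac{C}{n}\sum_{k=0}^{n-1}|T^k(|f|)|\leq C\,A_n(|T|)(|f|)$, which is shorter and yields the sharper bound $C\|f\|_1/\lb\leq 6C\|f\|_1/\lb$. Since Theorem \ref{t41} is in any case only invoked for the unweighted averages of $|T|$, the paper's shift-and-split decomposition buys nothing here, and your version is the cleaner one. You are also right to flag that the only nontrivial point is that $T^k(|f|)$ need not be nonnegative (or even real), so the linear modulus is genuinely needed to pass the absolute value inside the sum --- that is exactly the role Proposition \ref{p31}(ii) plays in the paper's proof as well.
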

\begin{proof}
We have
\[
B_n(T)=\frac1n\sum_{k=0}^{n-1}(\operatorname{Re}\bt_k+C)T^k+\frac{i}n\sum_{k=0}^{n-1}(\operatorname{Im}\bt_k+C)T^k-C(1+i)A_n(T).
\]
Therefore, as $0\leq\operatorname{Re}\bt_k+C\leq2C$ and $0\leq\operatorname{Im}\bt_k+C\leq2C$ for every $k$, it follows that
\[
|B_n(T)(f)|\leq6CA_n(|T|)(|f|)\text{ \ for every\ }f\in \cal L^1+\cal L^\ii\text{ \ and\ }n,
\]
and Theorem \ref{t41} implies that
\[
\begin{split}
\mu\{B(T)^*(|f|)>\lb\}&=\mu\left\{\sup_n|B_n(T)(|f|)|>\lb\right\}\leq\mu\left\{6C\sup_n|A_n(|T|)(|f|)|>\lb\right\}\\
&=\mu\left\{A(|T|)^*(|f|)>\frac{\lb}{6C}\right\}\leq 6C\frac{\|f\|_1}\lb.
\end{split}
\]
\end{proof}

Let us denote
\[
\cal L^0_\mu=\left\{f\in\cal L^0: \, \mu\{| f |>\lb\}<\ii\text{ \ for some\  }\lb>0\right\}.
\]

\begin{pro}[see \cite{cl1}, Proposition 3.1]\label{p41}
The $*$-subalgebra $\cal L^0_\mu$ of $\cal L^0$ is complete with respect to a.u. convergence.
\end{pro}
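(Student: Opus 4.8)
The plan is to verify completeness in two stages: first that every a.u.-Cauchy sequence in $\cal L^0$ admits an a.u. limit $f\in\cal L^0$, and then that whenever the members of the sequence lie in $\cal L^0_\mu$ this limit stays in $\cal L^0_\mu$. That $\cal L^0_\mu$ is a $*$-subalgebra is routine from the inclusions $\{|f+g|>\lb\}\su\{|f|>\lb/2\}\cup\{|g|>\lb/2\}$ and $\{|fg|>\lb\}\su\{|f|>\sqrt\lb\}\cup\{|g|>\sqrt\lb\}$ (choosing the thresholds from the data defining membership), so I concentrate on completeness. Call $\{f_n\}\su\cal L^0$ a.u.-Cauchy if for every $\ep>0$ there is $G\su\Om$ with $\mu(\Om\sm G)\leq\ep$ on which $\{f_n\}$ is uniformly Cauchy, i.e. $\sup_{n,m\ge N}\|(f_n-f_m)\chi_G\|_\ii\to0$ as $N\to\ii$.

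To build the limit I would apply this with $\ep=2^{-k}$ to get sets $H_k$ and replace them by the increasing sets $G_k=\bigcup_{j\le k}H_j$; then $\mu(\Om\sm G_k)\le2^{-k}$ and, since a finite maximum of null-tending quantities tends to $0$, $\{f_n\}$ is still uniformly Cauchy on each $G_k$. Put $G=\bigcup_kG_k$. Because the complements $\Om\sm G_k$ decrease and $\mu(\Om\sm G_1)<\ii$, continuity from above gives $\mu(\Om\sm G)=\lim_k\mu(\Om\sm G_k)=0$. On each $G_k$ the uniform Cauchy condition yields a uniform limit, and these limits are consistent as $k$ grows, so they patch to a measurable, a.e. finite function $f$ (finiteness on $G_k$ follows from $|f|\le|f_{n_0}|+1$ there for suitable $n_0$); setting $f=0$ on the null set $\Om\sm G$ gives $f\in\cal L^0$. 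Uniform convergence on each $G_k$ together with $\mu(\Om\sm G_k)\le2^{-k}$ is exactly the assertion that $f_n\to f$ a.u.

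It remains to see that $f\in\cal L^0_\mu$ when each $f_n\in\cal L^0_\mu$, and here the ``for some $\lb$'' form of the definition makes a single level suffice, mirroring the argument of Proposition \ref{p22}. Taking $\ep=1$ in the a.u. convergence, I get $E\su\Om$ with $\mu(\Om\sm E)<1$ and an index $n$ with $\|(f-f_n)\chi_E\|_\ii\le1$, so that $|f|\le|f_n|+1$ on $E$. Choosing $\lb_n>0$ with $\mu\{|f_n|>\lb_n\}<\ii$ (available since $f_n\in\cal L^0_\mu$), one has $\{|f|>\lb_n+1\}\su\{|f_n|>\lb_n\}\cup(\Om\sm E)$, whence $\mu\{|f|>\lb_n+1\}\le\mu\{|f_n|>\lb_n\}+\mu(\Om\sm E)<\ii$ and $f\in\cal L^0_\mu$.

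The only genuinely delicate point is the first stage: on an infinite measure space uniform Cauchyness on a single $G$ says nothing globally, so the exhaustion by the $G_k$ with geometrically small complements --- and the resulting co-null $G$ obtained via continuity from above --- is what converts a.u.-Cauchyness into a.e. pointwise convergence and lets one recover a bona fide measurable limit. The second stage is comparatively soft, since membership in $\cal L^0_\mu$ is a one-level condition that survives the crude bound $|f|\le|f_n|+1$ on $E$ together with $\mu(\Om\sm E)<\ii$.
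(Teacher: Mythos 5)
Your argument is correct, and it is essentially the canonical proof: the paper itself supplies no argument for Proposition~\ref{p41}, deferring entirely to \cite{cl1}, Proposition~3.1, and your two stages (exhausting $\Om$ by sets $G_k$ with $\mu(\Om\sm G_k)\leq 2^{-k}$ on which the sequence is uniformly Cauchy, so that continuity from above yields a co-null set carrying a measurable a.e.\ finite limit, then the one-level inclusion $\{|f|>\lb_n+1\}\su\{|f_n|>\lb_n\}\cup(\Om\sm E)$ to keep that limit in $\cal L^0_\mu$) are exactly what the cited reference does. The only point worth flagging is that your notion of a.u.-Cauchy (one set $G$ per $\ep$ on which the sequence is uniformly Cauchy) is formally stronger than the condition actually produced in the proof of Theorem~\ref{t44} (a set $E$ depending on both $\ep$ and $\dt$); the two are equivalent by intersecting the sets $E(\ep 2^{-j},1/j)$ over $j$, and it would be worth one line to say so.
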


In what follows $t_\mu$ will stand for the {\it measure topology} in $\cal L^0$, that is, the topology given by the following system of neighborhoods of zero:
\[
\cal N(\ep,\dt)=\{ f\in\cal L^0: \ \mu\{|f|>\dt\}\leq\ep\}, \ \ \ep>0, \ \dt>0.
\]
It is well-known that $(\cal L^0,t_\mu)$ is a complete metrizable topological vector space. Since $\cal L^0_\mu$ is a closed linear subspace of $(\cal L^0,t_\mu)$, it follows that $(\cal L^0_\mu,t_\mu)$ is also a complete metrizable topological  vector space.

A proof of the next fact is given in \cite[Lemma 3.1]{cl1}.
\begin{lm}\label{l42}
Let $(E,\|\cdot\|)$ be a Banach space. If the maximal operator $A^*: E\to\cal L^0$ of a sequence of linear maps $A_n: (E,\|\cdot\|)\to (\cal L^0_\mu,t_\mu)$ is continuous at zero, then the set
\[
E_c=\{f\in E: \, \{A_n(f)\}\text{\ converges a.u.}\}
\]
is closed in $E$.
\end{lm}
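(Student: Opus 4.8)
The plan is to prove that $E_c$ is closed by a Banach-principle argument: I will take a norm-limit $f$ of points $f_j\in E_c$ and show directly that $\{A_n(f)\}$ is \emph{a.u. Cauchy}, meaning that for every $\ep>0$ there is a set $G$ with $\mu(\Om\sm G)\le\ep$ on which $\{A_n(f)\}$ is uniformly (i.e. $\|\cdot\|_\ii$-) Cauchy. Since $\cal L^0_\mu$ is complete with respect to a.u. convergence (Proposition \ref{p41}), this a.u. Cauchy property yields an a.u. limit in $\cal L^0_\mu$, whence $f\in E_c$.

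First I would unwind the continuity of the maximal operator $A^*$ at zero into a quantitative form: for every $\dt>0$ and every tolerance $\eta>0$ there is $\gamma>0$ such that $\|g\|<\gamma$ forces $\mu\{A^*(g)>\dt\}\le\eta$. Fix $\ep>0$. For each $s\in\mathbb N$ I would use this to pick an index $j_s$ with $\|f-f_{j_s}\|$ so small that the set $H_s=\{A^*(f-f_{j_s})>1/s\}$ satisfies $\mu(H_s)\le\ep\,2^{-s-1}$. Since each $f_{j_s}\in E_c$, the sequence $\{A_n(f_{j_s})\}$ converges a.u., so there is a set $D_s$ with $\mu(\Om\sm D_s)\le\ep\,2^{-s-1}$ on which $\{A_n(f_{j_s})\}$ is $\|\cdot\|_\ii$-Cauchy. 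I then set $G=\bigcap_{s\ge1}(D_s\sm H_s)$, so that $\mu(\Om\sm G)\le\sum_s(\mu(\Om\sm D_s)+\mu(H_s))\le\ep$.

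The crux is the oscillation estimate on this single set $G$. For each fixed $s$ and all $n,m$, linearity of $A_n$ together with $|A_n(f-f_{j_s})|\le A^*(f-f_{j_s})$ gives, on $G\su H_s^c$,
\[
|A_n(f)-A_m(f)|\le 2A^*(f-f_{j_s})+|A_n(f_{j_s})-A_m(f_{j_s})|\le \tfrac2s+|A_n(f_{j_s})-A_m(f_{j_s})|.
\]
Since $G\su D_s$ and $\{A_n(f_{j_s})\}$ is uniformly Cauchy on $D_s$, there is $N_s$ with $\|(A_n(f_{j_s})-A_m(f_{j_s}))\chi_G\|_\ii\le 1/s$ for $n,m\ge N_s$, whence $\|(A_n(f)-A_m(f))\chi_G\|_\ii\le 3/s$ for $n,m\ge N_s$. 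As $s$ is arbitrary, $\{A_n(f)\chi_G\}$ is $\|\cdot\|_\ii$-Cauchy, establishing a.u. fundamentality and, by Proposition \ref{p41}, a.u. convergence of $\{A_n(f)\}$.

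I expect the main obstacle to be exactly the construction of a single good set $G$ valid for all scales $s$ at once: the error term from the maximal operator, $2A^*(f-f_{j_s})$, does not decay as $n,m\to\ii$, so it cannot be absorbed the way the genuinely Cauchy term $|A_n(f_{j_s})-A_m(f_{j_s})|$ can. The resolution, which is the heart of the Banach principle, is to make the measures of the exceptional sets $H_s$ and $\Om\sm D_s$ summable so that their union stays below $\ep$; this lets me fix $G$ once and for all and then drive the oscillation to zero by letting $s\to\ii$ \emph{after} $G$ has been chosen.
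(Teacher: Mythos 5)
Your argument is correct and is precisely the standard Banach-principle closedness argument; the paper itself does not reproduce a proof but defers to \cite[Lemma 3.1]{cl1}, where essentially the same scheme is used (quantify continuity of $A^*$ at zero, pick approximants $f_{j_s}$ with summable exceptional sets $H_s$ and $D_s$, intersect to get one set $G$ of co-measure at most $\ep$, and bound the oscillation of $A_n(f)$ on $G$ by $2A^*(f-f_{j_s})+\operatorname{osc}(A_n(f_{j_s}))$ before letting $s\to\ii$). Your appeal to Proposition \ref{p41} to pass from a.u.\ fundamentality to a.u.\ convergence in $\cal L^0_\mu$ is exactly how the paper uses that proposition elsewhere (e.g.\ in Theorem \ref{t44}), so no gap remains.
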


Since Corollary \ref{c41} entails that the sequence $B_n(T): (\cal L^1,\|\cdot\|_1)\to(\cal L^0_\mu,t_\mu)$ is continuous at zero for every $T\in DS$, we arrive at the following.

\begin{cor}\label{c42}
If $T\in DS$ and $\{\bt_k\}\su \mathbb C$ is such that $\bt_k\leq C<\ii$ for all $k$, then the set
\[
\cal L^1_c=\left\{f\in \cal L^1:\, \{B_n(T)(f)\} \text{\ converges a.u.}\right\}
\]
is closed in $\cal L^1$.
\end{cor}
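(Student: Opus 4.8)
The plan is to obtain this as a direct application of Lemma \ref{l42}, taking the Banach space to be $E=(\cal L^1,\|\cdot\|_1)$ and the sequence of linear maps to be $A_n=B_n(T)$. Since $\cal L^1_c$ is, by definition, exactly the set $E_c$ of those $f\in\cal L^1$ for which $\{B_n(T)(f)\}$ converges a.u., the conclusion follows once the two hypotheses of Lemma \ref{l42} are checked: that each $B_n(T)$ maps $\cal L^1$ into $\cal L^0_\mu$, and that the maximal operator $B(T)^*(f)=\sup_n|B_n(T)(f)|$ is well defined and continuous at zero as a map $(\cal L^1,\|\cdot\|_1)\to(\cal L^0_\mu,t_\mu)$.

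The first hypothesis is immediate: for $f\in\cal L^1$ the function $B_n(T)(f)=\frac1n\sum_{k=0}^{n-1}\bt_kT^k(f)$ is a finite linear combination of the functions $T^k(f)\in\cal L^1$, hence lies in $\cal L^1$; and $\cal L^1\su\cal L^0_\mu$, since Chebyshev's inequality gives $\mu\{|g|>\lb\}\leq\|g\|_1/\lb<\ii$ for every $g\in\cal L^1$ and $\lb>0$.

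The substance of the argument lies in the continuity of the maximal operator at zero, and this is precisely where Corollary \ref{c41} is used. The pointwise domination $|B_n(T)(f)|\leq 6C\,A_n(|T|)(|f|)$ obtained in its proof is valid for every $n$, so taking the supremum over $n$ yields $B(T)^*(f)\leq 6C\,A(|T|)^*(|f|)$; combined with the maximal inequality of Theorem \ref{t41} for $|T|\in DS$, this gives
\[
\mu\{B(T)^*(f)>\lb\}\leq 6C\frac{\|f\|_1}{\lb}\qquad\text{for all } f\in\cal L^1,\ \lb>0.
\]
Letting $\lb\to\ii$ shows that $B(T)^*(f)$ is finite a.e., so the maximal operator $B(T)^*:\cal L^1\to\cal L^0$ is well defined. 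Moreover, given a basic $t_\mu$-neighborhood $\cal N(\ep,\dt)$ of zero, I would choose $f$ with $\|f\|_1<\ep\dt/(6C)$; the displayed estimate then gives $\mu\{B(T)^*(f)>\dt\}\leq\ep$, that is, $B(T)^*(f)\in\cal N(\ep,\dt)$, which is continuity at zero.

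With both hypotheses verified, Lemma \ref{l42} yields that $E_c=\cal L^1_c$ is closed in $\cal L^1$. I do not anticipate a genuine obstacle here: the analytic core — a uniform weak-type maximal inequality — is already furnished by Corollary \ref{c41} together with Theorem \ref{t41}, and the only point requiring attention is to state the estimate for the full maximal operator $\sup_n|B_n(T)(f)|$ rather than for the individual averages $B_n(T)(f)$, which the pointwise domination above renders routine.
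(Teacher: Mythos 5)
Your proposal is correct and follows the paper's own route exactly: the paper also obtains Corollary \ref{c42} by applying Lemma \ref{l42} with $E=\mathcal L^1$ and $A_n=B_n(T)$, using the weak-type maximal inequality of Corollary \ref{c41} (itself resting on Theorem \ref{t41}) to get continuity of the maximal operator at zero. The only difference is that you spell out the verification of the hypotheses of Lemma \ref{l42}, which the paper leaves as a one-line remark.
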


Note that Proposition \ref{p21} implies that $T(\cal R_\mu)\su\cal R_\mu$ for any $T\in DS$. The following theorem was established in \cite[Theorems 3.1, 3.4]{cl1} (see also \cite{kk}) under the initial assumption that the operator $T$ satisfied conditions (\ref{e31}). Also, even though it was proved for real-valued functions, the argument remains valid in the general case.

\begin{teo}\label{t42}
If $T\in DS$, then for every $f\in\cal R_\mu$ the averages $A_n(T)(f)$ converge a.u. to some $\wh f\in\cal R_\mu$.
Conversely, if  \ $f\in (\cal L^1+ \cal L^\ii) \setminus \cal R_\mu$, then there exists \ $T\in DS$ such that the sequence $\{A_n(T)(f)\}$ does not converge a.e., hence a.u.
\end{teo}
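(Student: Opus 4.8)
The plan is to prove the two implications separately: the direct one by reducing to the classical $\cal L^1$ theory through the decomposition of Proposition \ref{p21}, and the converse by constructing a Dunford-Schwartz operator adapted to the given function. Throughout I would use Theorem \ref{t31} to regard $T$ as acting on all of $\cal L^1+\cal L^\ii$ subject to (\ref{e31}).

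First I would settle the case $f\in\cal L^1$. The classical Dunford-Schwartz pointwise theorem (Garcia's proof, referenced in the introduction) gives $A_n(T)(f)\to\wh f$ a.e., and since $\|A_n(T)(f)\|_1\le\|f\|_1$, Fatou's lemma yields $\wh f\in\cal L^1\su\cal R_\mu$. To promote a.e. to a.u. convergence I would invoke Theorem \ref{t41} applied to the linear modulus $|T|$: together with $|A_n(T)(f)|\le A_n(|T|)(|f|)$ it gives $\mu\{\sup_n|A_n(T)(f)|>\lb\}\le\|f\|_1/\lb$ for every $\lb>0$, so the maximal function has level sets of finite measure. Hence, for fixed $\ep>0$, the sets $E_{N,\ep}=\{\sup_{n\ge N}|A_n(T)(f)-\wh f|>\ep\}$ lie inside $\{\sup_n|A_n(T)(f)|>\ep/2\}\cup\{|\wh f|>\ep/2\}$, a set of finite measure; as $E_{N,\ep}$ decreases to a null set in $N$, we get $\mu(E_{N,\ep})\to0$. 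An Egorov-type diagonalization over $\ep=1/j$ then produces, for each $\dt>0$, a set $G$ with $\mu(\Om\sm G)\le\dt$ on which $A_n(T)(f)\to\wh f$ uniformly. Thus the maximal inequality is precisely what confines the exceptional sets to finite measure, which is the feature that fails for a naive Egorov argument in infinite measure.

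To pass from $\cal L^1$ to $\cal R_\mu$ I would use Proposition \ref{p21}. Fix $f\in\cal R_\mu$ and $\dt>0$; for each $j$ write $f=g_j+h_j$ with $g_j\in\cal L^1$ and $\|h_j\|_\ii\le2^{-j}$. The $\cal L^1$ case gives a set $G_j$ with $\mu(\Om\sm G_j)\le\dt2^{-j}$ on which $A_n(T)(g_j)$ converges uniformly, while (\ref{e31}) forces $\|A_n(T)(h_j)\|_\ii\le2^{-j}$ for all $n$. On $G=\bigcap_jG_j$, where $\mu(\Om\sm G)\le\dt$, one then obtains for every $j$
\[
\limsup_{n,m}\|(A_n(T)(f)-A_m(T)(f))\chi_G\|_\ii\le 2^{1-j},
\]
so the left-hand side vanishes and $\{A_n(T)(f)\chi_G\}$ is uniformly Cauchy on $G$. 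As $\dt$ is arbitrary, $\{A_n(T)(f)\}$ is a.u.-Cauchy, hence a.u. convergent to some $\wh f\in\cal L^0_\mu$ by Proposition \ref{p41}; since $A_n(T)(f)\in\cal R_\mu$ for all $n$, Proposition \ref{p22} gives $\wh f\in\cal R_\mu$. (The a.u.-Cauchy step could instead be routed through Lemma \ref{l42} and Corollary \ref{c41}.)

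For the converse, assume $f\in(\cal L^1+\cal L^\ii)\sm\cal R_\mu$, so $\mu\{|f|>\lb_0\}=\ii$ for some $\lb_0>0$. Passing to a sub-sector on which $\arg f$ is nearly constant (one such piece still has infinite measure), I would carve from $\{|f|>\lb_0\}$ countably many pairwise disjoint sets of equal finite measure and use them to erect a tower supporting a measure preserving transformation $\tau$, whence $Tg=g\circ\tau$ lies in $DS$. Choosing the successive block lengths of the tower to grow rapidly, the orbit of a typical $\om$ spends alternating long stretches inside $\{|f|>\lb_0\}$ and inside a region where $f$ contributes negligibly, so that $A_n(T)(f)(\om)$ oscillates between values of modulus $\gtrsim\lb_0$ and values near $0$ along a set of positive measure; this exhibits a $T\in DS$ for which $\{A_n(T)(f)\}$ diverges a.e. I expect this construction to be the main obstacle: the direct implication merely assembles the stated results, whereas forcing genuine oscillation on a positive-measure set, while controlling the phase of a complex, non-constant $f$ along orbits, demands a careful quantitative choice of the carved sets and the block lengths.
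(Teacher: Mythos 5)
First, note that the paper does not actually prove this theorem: it is imported from \cite[Theorems 3.1, 3.4]{cl1} (with \cite{kk} supplying the counterexamples), so you are attempting strictly more than the text does. Your direct half is correct and self-contained. The maximal inequality of Theorem \ref{t41}, applied to the linear modulus $|T|$, confines the level sets of $\sup_n|A_n(T)(f)|$ to finite measure, which is precisely what legitimizes the Egorov-type upgrade from a.e.\ to a.u.\ convergence on an infinite measure space; and the passage from $\cal L^1$ to $\cal R_\mu$ via the $g+h$ splitting of Proposition \ref{p21}, combined with Propositions \ref{p41} and \ref{p22}, is the same device the paper itself uses to prove Theorem \ref{t44}.

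The converse, however, contains a genuine gap, and it is not merely a matter of unfinished detail: the class of operators you propose to build cannot work. Take $f=\mb 1$, which lies in $(\cal L^1+\cal L^\ii)\sm\cal R_\mu$ whenever $\mu(\Om)=\ii$. Every operator of the form $T(g)=g\circ\tau$ with $\tau$ a measure preserving transformation satisfies $T(\mb 1)=\mb 1$, hence $A_n(T)(\mb 1)=\mb 1$ for all $n$ and the averages converge trivially; so no tower construction over a m.p.t.\ can witness divergence for this $f$. The underlying defect in your sketch is the assumption that there exists an infinite-measure region ``where $f$ contributes negligibly'': for $f\notin\cal R_\mu$ one only knows $\mu\{|f|>\lb_0\}=\ii$ for some $\lb_0>0$, while $\mu\{|f|\le\ep\}$ may be finite, or even zero, for every small $\ep$. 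The counterexamples of \cite{cl} and \cite{kk} therefore must employ genuinely non-deterministic Dunford--Schwartz operators, in particular ones with $T(\mb 1)\ne\mb 1$, and constructing them is the real content of the converse; your proposal does not contain that idea.
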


In particular, Theorem \ref{t42} entails that Dunford-Schwartz pointwise ergodic theorem holds for $f\in\cal L^1+\cal L^\ii$ and for any $T \in DS$ if and only if $f\in \cal R_\mu$.

\begin{lm}\label{l43}
Let $(X,\nu)$ and $Y,\mu)$ be $\sigma$-finite measure spaces, and let $\{g_n\}\su\cal L^0(X\otimes Y,\nu\otimes\mu)$ be such that $g_n\to g$ a.u. on $X\otimes Y$. Then $g_n(x,\cdot)\to g(x,\cdot)$ a.u. on $Y$ for almost all $x\in X$.
\end{lm}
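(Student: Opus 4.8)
The plan is to unwind the definition of a.u. convergence on the product, use Tonelli's theorem to pass to slices, and then run a Borel--Cantelli argument that renders the exceptional slices negligible for almost every $x$. First I would fix a sequence $\ep_m\downarrow 0$ and, for each $m$, invoke the a.u. convergence $g_n\to g$ on $X\otimes Y$ to produce a measurable set $G^{(m)}\su X\otimes Y$ with
\[
(\nu\otimes\mu)\big((X\otimes Y)\sm G^{(m)}\big)\leq\ep_m\quad\text{and}\quad c_n^{(m)}:=\|(g-g_n)\chi_{G^{(m)}}\|_\ii\to 0\ \ (n\to\ii).
\]
Writing $G^{(m)}_x=\{y\in Y:(x,y)\in G^{(m)}\}$ for the $x$-slice, the two quantities I must then control, uniformly in $m$, are the slice-wise uniform convergence on $G^{(m)}_x$ and the size $\mu(Y\sm G^{(m)}_x)$.

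For the convergence on slices, I would observe that the equality $\|(g-g_n)\chi_{G^{(m)}}\|_\ii=c_n^{(m)}$ says precisely that the set
\[
E^{(m)}_n=\big\{(x,y)\in G^{(m)}:\,|g(x,y)-g_n(x,y)|>c_n^{(m)}\big\}
\]
is $\nu\otimes\mu$-null. Since there are only countably many pairs $(m,n)$, the union $E=\bigcup_{m,n}E^{(m)}_n$ is null, and hence, by Tonelli applied to $\chi_E$, its $x$-slice $E_x$ is $\mu$-null for $\nu$-almost every $x$. For every such $x$ one has $(E^{(m)}_n)_x\su E_x$, and therefore $\|(g(x,\cdot)-g_n(x,\cdot))\chi_{G^{(m)}_x}\|_\ii\leq c_n^{(m)}\to0$ for all $m$ simultaneously.

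To control the slice complements I would apply Tonelli to $\chi_{(X\otimes Y)\sm G^{(m)}}$, giving
\[
\int_X\mu\big(Y\sm G^{(m)}_x\big)\,d\nu(x)=(\nu\otimes\mu)\big((X\otimes Y)\sm G^{(m)}\big)\leq\ep_m.
\]
Choosing $\ep_m=4^{-m}$ and invoking Chebyshev's inequality yields $\nu\{x:\mu(Y\sm G^{(m)}_x)>2^{-m}\}\leq 2^{-m}$; since these bounds are summable, Borel--Cantelli shows that for $\nu$-almost every $x$ one has $\mu(Y\sm G^{(m)}_x)\leq 2^{-m}$ for all sufficiently large $m$, whence $\mu(Y\sm G^{(m)}_x)\to0$ as $m\to\ii$.

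Intersecting the two $\nu$-conull sets obtained above, I would fix a generic $x$ and an arbitrary $\dt>0$, then choose $m$ so large that $\mu(Y\sm G^{(m)}_x)\leq\dt$; the set $H=G^{(m)}_x$ then satisfies $\mu(Y\sm H)\leq\dt$ together with $\|(g(x,\cdot)-g_n(x,\cdot))\chi_H\|_\ii\to0$, which is exactly a.u. convergence of $g_n(x,\cdot)$ to $g(x,\cdot)$ on $Y$. The main obstacle is the passage from a complement that is small only in the integrated sense, since Tonelli controls $\int_X\mu(Y\sm G^{(m)}_x)\,d\nu$ rather than each individual slice, to one that is small for almost every fixed $x$ and for every prescribed $\dt$; the Chebyshev/Borel--Cantelli device applied across the whole sequence $\{G^{(m)}\}$ is precisely what bridges this gap, and a single witness set $G$ would not suffice.
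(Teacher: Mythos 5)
Your proof is correct and follows essentially the same route as the paper's: Tonelli reduces the smallness of $(X\otimes Y)\sm G$ to an integrated bound on the slice complements, Chebyshev controls the set of bad $x$, and a countable union/intersection of null sets finishes the argument. Your version is in fact slightly more careful on two points the paper glosses over --- the a.e.\ validity of $\|(g(x,\cdot)-g_n(x,\cdot))\chi_{G_x}\|_{\cal L^\ii(Y)}\leq\|(g-g_n)\chi_{G}\|_{\cal L^\ii(X\otimes Y)}$, and the need to handle all tolerances $\dt>0$ simultaneously for a single conull set of $x$ (which you achieve via Borel--Cantelli, where the paper fixes one $\ep$ and leaves the final intersection implicit).
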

\begin{proof}
Fix $\ep>0$. Given $k\in\mathbb N$, there exists $G_k\su X\otimes Y$ such that
\[
(\nu\otimes\mu)((X\otimes Y)\sm G_k)<\frac{\ep^2}k \text{ \ \ and \ \ } \|(g-g_n)\chi_{G_k}\|_\ii\to 0\text{\, \ as\,\ }n\to\ii.
\]
If $x\in X$ and
\[
G_k(x)=\{y\in Y:\, (x,y)\in G_k\},
\]
then we have
\[
\begin{split}
\frac{\ep^2}k&>(\nu\otimes\mu)((X\otimes Y)\sm G_k)=\int_X\mu(Y\sm G_k(x))d\nu(x)\\
&\ge\int_{X\sm  X_k}\mu(Y\sm G_k(x))d\nu(x).
\end{split}
\]
Therefore, it follows that
\[
\nu(X\sm X_k)<\frac{\ep}k\text{ \ \ for \ }X_k=\{x\in X:\, \mu(Y\sm G_k(x))<\ep\},
\]
implying that if $X^\prime=\cup_kX_k$, then
\[
\nu(X\sm X^\prime)=0.
\]

Now, if $x\in X^\prime$, then $x\in X_{k_0}$ for some $k_0$, so, if $Y_x=G_{k_0}(x)$, then $\mu(Y\sm Y_x)<\ep$ and
\[
\|(g(x,\cdot )-g_n(x,\cdot))\chi_{Y_x}\|_{\cal L^\ii(Y)}\leq\|(g(x,\cdot)-g_n(x,\cdot))\chi_{G_{k_0}}\|_{\cal L^\ii(X\otimes Y)}\to 0,
\]
that is, $g_n(x,\cdot)\to g(x,\cdot)$ a.u. on $Y$.
\end{proof}

The following fact can be easily verified.
\begin{lm}\label{l41}
 Let a sequence $\{ b_n\}\su\cal L^\ii$ be such that, given $\ep>0$,
there exists an a.u. convergent sequence $\{a_n \}\su\cal L^\ii$ for which the inequality
\[
\| b_n - a_n \|_\ii\leq\ep
\]
\noindent
holds for all big enough $n$. Then the sequence $\{ b_n \}$ itself converges a.u.
\end{lm}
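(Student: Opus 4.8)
The plan is to exploit completeness of the space under almost uniform convergence together with the $\ep$\,-\,closeness hypothesis to manufacture the limit of $\{b_n\}$ as a limit of the approximating sequences. First I would observe that it suffices to produce, for each $\ep>0$, an honest a.u. Cauchy estimate for $\{b_n\}$; since $\cal L^\ii\su\cal L^0_\mu$ is closed and hence complete with respect to a.u. convergence (Proposition~\ref{p41}), any sequence that is ``a.u. Cauchy'' will have an a.u. limit. Concretely, fix $\ep>0$ and let $\{a_n\}$ be the a.u. convergent sequence guaranteed by the hypothesis, say $a_n\to a$ a.u., with $\|b_n-a_n\|_\ii\leq\ep$ for all $n\geq N_\ep$.

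Next I would combine the two modes of convergence. Because $a_n\to a$ a.u., for the given $\ep$ there is a set $G\su\Om$ with $\mu(\Om\sm G)\leq\ep$ and $\|(a-a_n)\chi_G\|_\ii\to 0$. On $G$ I can then estimate, for $m,n$ large,
\[
\|(b_n-b_m)\chi_G\|_\ii\leq\|(b_n-a_n)\chi_G\|_\ii+\|(a_n-a_m)\chi_G\|_\ii+\|(a_m-b_m)\chi_G\|_\ii\leq 2\ep+\|(a_n-a_m)\chi_G\|_\ii,
\]
and the middle term is controlled by $\|(a_n-a)\chi_G\|_\ii+\|(a-a_m)\chi_G\|_\ii\to 0$. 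This shows that on the set $G$ the tail of $\{b_n\}$ oscillates by at most $2\ep$ in the sup-norm, uniformly in large indices.

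The main point is to upgrade this ``$2\ep$ oscillation off a set of measure $\leq\ep$'' into genuine a.u. convergence, and here I expect the crux to be a diagonal/exhaustion argument rather than any single hard estimate. I would apply the construction with $\ep$ replaced by a sequence $\ep_j\downarrow 0$, obtaining sets $G_j$ with $\mu(\Om\sm G_j)\leq\ep_j$ on which $\{b_n\}$ is eventually $2\ep_j$\,-\,Cauchy in $\|\cdot\|_\ii$. Intersecting suitable $G_j$ (or passing to $\bigcup_j\bigcap_{i\geq j}G_i$) produces, for any prescribed total measure budget, a set on which $\{b_n\}$ is uniformly Cauchy and hence uniformly convergent; letting the budget tend to $0$ yields a.u. convergence of $\{b_n\}$. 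Equivalently, and perhaps more cleanly, I would phrase the result through completeness: the above inequalities show $\{b_n\}$ is Cauchy with respect to a.u. convergence in the sense that it can be approximated arbitrarily well off sets of arbitrarily small measure by a.u. convergent sequences, and then invoke Proposition~\ref{p41} to conclude that $\{b_n\}$ has an a.u. limit.

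The delicate step to watch is the bookkeeping of the exceptional sets: the hypothesis only furnishes an approximating sequence \emph{for each fixed} $\ep$, and the approximant (together with its exceptional set $G$) may change with $\ep$, so one must be careful to combine finitely or countably many such sets without letting the total excluded measure blow up. As long as the $\ep_j$ are summable (e.g. $\ep_j=2^{-j}\eta$), the union bound keeps the excluded measure below $\eta$, and the argument closes.
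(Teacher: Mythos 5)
Your argument is correct and complete. The paper itself offers no proof of this lemma (it is dismissed with ``can be easily verified''), so there is nothing to compare against; the route you take --- the three-term estimate $\|(b_n-b_m)\chi_G\|_\ii\leq 2\ep+\|(a_n-a_m)\chi_G\|_\ii$ on the Egorov set of the approximating sequence, followed by the summable-$\ep_j$ intersection of exceptional sets to pass from ``oscillation $\leq 2\ep$ off a set of measure $\leq\ep$'' to a genuine uniformly Cauchy tail off a set of measure $\leq\eta$ --- is exactly the standard verification the authors have in mind, and your explicit attention to the fact that the approximant and its exceptional set change with $\ep$ addresses the only point where one could slip.

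One parenthetical claim should be dropped or corrected: $\cal L^\ii$ is \emph{not} closed in $\cal L^0_\mu$ under a.u.\ convergence (on a finite measure space, Egorov's theorem makes any a.e.\ finite unbounded $f$ an a.u.\ limit of its truncations $\min(|f|,n)\operatorname{sgn} f\in\cal L^\ii$). This is harmless here, because the lemma only asserts that $\{b_n\}$ converges a.u., not that the limit lies in $\cal L^\ii$; the completeness of $\cal L^0_\mu$ from Proposition~\ref{p41}, or simply the direct construction of the limit as the common uniform limit on the exhausting sets, is all you need to close the argument.
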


\begin{teo}\label{t43}
Let $T\in DS$, and let $\{\beta_k\}$ be a bounded Besicovitch sequence. Then for every $f\in\cal L^1$ the averages (\ref{e42}) converge a.u.
\end{teo}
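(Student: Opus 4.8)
Theorem \ref{t43} asserts a.u. convergence of the Besicovitch-weighted averages $B_n(T)(f)$ for $f \in \mathcal{L}^1$.

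The plan is to reduce the general Besicovitch weight to a trigonometric polynomial, to prove the theorem for polynomial weights by appealing to Theorem \ref{t42}, and then to transfer convergence from the polynomial to the Besicovitch weight by an approximation argument---carried out first on the dense subspace $\cal L^1\cap\cal L^\ii$, where an $\cal L^\ii$ estimate is available, and then extended to all of $\cal L^1$ by the closedness supplied by Corollary \ref{c42}.

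First I would treat a single exponential weight $\bt_k=\lb^k$ with $\lb\in\mathbb C_1$. Since $|\lb|=1$, the operator $\lb T$ again satisfies (\ref{e31}), so $\lb T\in DS$, and
\[
B_n(T)(f)=\frac1n\sum_{k=0}^{n-1}\lb^kT^k(f)=\frac1n\sum_{k=0}^{n-1}(\lb T)^k(f)=A_n(\lb T)(f).
\]
As $\cal L^1\su\cal R_\mu$, Theorem \ref{t42} applied to $\lb T$ gives a.u. convergence of these averages for every $f\in\cal L^1$. For a trigonometric polynomial $P(k)=\sum_{j=1}^sz_j\lb_j^k$ the corresponding averages equal the finite linear combination $\sum_{j=1}^sz_jA_n(\lb_jT)(f)$, and since a finite linear combination of a.u. convergent sequences converges a.u. (intersect the Egorov sets), the averages with polynomial weights converge a.u. for every $f\in\cal L^1$.

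Next I would fix $f\in\cal L^1\cap\cal L^\ii$ and, given $\ep>0$, pick a trigonometric polynomial $P$ with $\limsup_n\frac1n\sum_{k=0}^{n-1}|\bt_k-P(k)|<\ep$ as in (\ref{e41}). Writing $B_n^P$ for the averages with weights $P(k)$ and using $\|T^k(f)\|_\ii\leq\|f\|_\ii$, one obtains the key estimate
\[
\|B_n(f)-B_n^P(f)\|_\ii\leq\left(\frac1n\sum_{k=0}^{n-1}|\bt_k-P(k)|\right)\|f\|_\ii,
\]
whose right-hand side is eventually smaller than $\ep\,\|f\|_\ii$. Since $\{B_n^P(f)\}$ converges a.u. by the previous step and $\ep>0$ is arbitrary, Lemma \ref{l41} yields a.u. convergence of $\{B_n(f)\}$ for every $f\in\cal L^1\cap\cal L^\ii$. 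Finally, as $\{\bt_k\}$ is bounded, Corollary \ref{c42} shows that the set of $f\in\cal L^1$ for which $\{B_n(f)\}$ converges a.u. is closed in $\cal L^1$; it contains the dense subspace $\cal L^1\cap\cal L^\ii$, hence coincides with $\cal L^1$.

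The step I expect to be the main obstacle is the transfer of convergence from $P$ to $\{\bt_k\}$. The maximal inequality of Corollary \ref{c41} controls the weighted averages only through the uniform bound on the weights, and that bound does not shrink under Besicovitch approximation; the Ces\`aro smallness in (\ref{e41}) is not directly usable there. This is what forces the two-stage argument: the genuinely $\cal L^\ii$ estimate above is valid only for $f\in\cal L^1\cap\cal L^\ii$, so Lemma \ref{l41} can be invoked only on that subspace, after which the closedness in Corollary \ref{c42} together with the density of $\cal L^1\cap\cal L^\ii$ in $\cal L^1$ is needed to reach an arbitrary $f\in\cal L^1$.
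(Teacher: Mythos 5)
Your proof is correct, and it reaches the single most technical point of the argument by a genuinely different and shorter route. The paper handles the exponential weight $\lb^k$ by transference: it builds the skew product $T_\lb$ on $\cal L^1(\mathbb C_1\otimes\Om)$ via $T_\lb(\wt f)(z,\om)=T(f_{\lb z})(\om)$, checks $T_\lb\in DS$, applies Theorem \ref{t42} on the product space to $\wt f(z,\om)=zf(\om)$, and then uses Lemma \ref{l43} to descend from a.u.\ convergence on $\mathbb C_1\otimes\Om$ to a.u.\ convergence on $\Om$ for some fixed $z$. You instead observe that $\lb T\in DS$ whenever $|\lb|=1$ (the defining contraction inequalities are insensitive to a unimodular scalar) and that $\frac1n\sum_{k=0}^{n-1}\lb^kT^k(f)=A_n(\lb T)(f)$, so Theorem \ref{t42} applies directly on $\Om$. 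Since the paper explicitly states Theorem \ref{t42} for arbitrary (complex, not necessarily positive) $T\in DS$, your shortcut is legitimate and renders Lemma \ref{l43} and the whole product-space construction unnecessary for this theorem; the transference route would only be forced if the underlying pointwise theorem were available solely for positive operators or for operators induced by measure-preserving transformations, where $\lb T$ leaves the admissible class but $T_\lb$ does not. The remainder of your argument --- linearity over the frequencies of $P$, the $\cal L^\ii$ estimate $\|B_n(f)-B_n^P(f)\|_\ii\leq\bigl(\frac1n\sum_{k=0}^{n-1}|\bt_k-P(k)|\bigr)\|f\|_\ii$ on $\cal L^1\cap\cal L^\ii$, Lemma \ref{l41}, and the closedness from Corollary \ref{c42} combined with density --- coincides with the paper's, and your closing remark correctly identifies why the Besicovitch approximation cannot be pushed through the maximal inequality and must instead go through the dense subspace.
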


\begin{proof} In view of Corollary \ref{c42}, in order to prove that the averages $B_n(T)$ converge a.u. in $\cal L^1$ for every $T\in DS$, it is sufficient to present a dense subset $D$ of $\cal L^1$ such that the sequence $\{B_n(T)(f)\}$ converges a.u. for each $f\in D$.

Following the scheme in \cite{ry}, we begin by showing that,
given a trigonometric polynomial $P$ and $f\in \cal L^1$, the averages
\[
A_n^{(P)}(T)(f)=\frac 1n \sum_{k=0}^{n-1} P(k) T^k(f)
\]
converge a.u.  Consider the product space $(\Bbb C_1, \nu)\otimes (\Om, \mu)$, where $\nu$ is
Lebesgue measure in $\Bbb C_1$. Fix $\lb \in \Bbb C_1$ and define an operator $T_{\lb}$ on $\cal L^1(\Bbb C_1 \otimes \Om)$ as follows: if $\wt f\in \cal L^1(\Bbb C_1 \otimes \Om)$, $z\in \Bbb C_1$, and $\om \in \Om$, we put
$$
T_{\lb}(\wt f)(z,\om)=T(f_{\lb z})(\om), \text { \ where \ } f_z(\om)=\wt f(z,\om)
$$
\noindent
(note that $f_z\in \cal L^1$ for almost all $z\in \Bbb C_1$). It is easily verified that $T_{\lb}\in DS$ on $\cal L^1(\Bbb C_1 \otimes \Om)+\cal L^{\ii}(\Bbb C_1 \otimes \Om)$. For instance, given $\wt f\in \cal L^1(\Bbb C_1 \otimes \Om)$, we have
\[
\begin{split}
\int\limits_{\Bbb C_1\otimes\Om}&\big|T_\lb(\wt f)(z,\om)\big|d (\nu\otimes\mu)
=\int\limits_{\Bbb C_1}\int\limits_{\Om}\big|T(f_{\lb z})(\om)\big| d \mu\,d \nu\leq \int\limits_{\Bbb C_1}\int\limits_\Om\big|f_{\lb z}(\om)\big|  d \mu\,d \nu\\
&=\int\limits_\Om\int\limits_{\Bbb C_1}\big|f_{\lb z}(\om)\big|d\nu\,d \mu =\int\limits_\Om\int\limits_{\Bbb C_1}\big|f_z(\om)\big|d \nu\,d \mu =\int\limits_{\Bbb C_1\otimes\Om}\big|\wt f(z,\om)\big|d (\nu\otimes\mu)=\|\wt f\|_1,
\end{split}
\]
hence $T_{\lb}(\wt f)\in \cal L^1(\Bbb C_1 \otimes \Om)$ and $\| T_{\lb}(\wt f)\|_1\leq \| \wt f\|_1$.

\noindent
It follows by induction that
$$
(T_{\lb}^k(\wt f))_z=T^k(f_{\lb^kz}), \ k=1,2, \dots
$$
\noindent
Indeed, we have $(T_{\lb}(\wt f))_z(\om)=T_{\lb}(\wt f)(z,\om)=T(f_{\lb z})(\om)$, so that \\ $(T_{\lb}(\wt f))_z=T(f_{\lb z}$),
and if $(T_{\lb}^k(\wt f))_z=T^k(f_{\lb^kz})$ for some $k\in \Bbb N$, then
$$
T_{\lb}^{k+1}(\wt f)_z(\om)
=T_{\lb}(T_{\lb}^k(\wt f))(z,\om) =T(T_{\lb}^k(\wt f)_{\lb z})(\om)
$$
$$
=T(T^k(f_{\lb^{k+1}z}))(\om) =T^{k+1}(f_{\lb^{k+1}z})(\om).
$$
Therefore, one can write
$$
T_{\lb}^k(\wt f)(z,\om)=(T_{\lb}^k(\wt f))_z(\om)=T^k(f_{\lb^kz})(\om), \  k=1,2, ...
$$
\noindent
Now, if $\wt f\in \cal L^1(\Bbb C_1 \otimes \Om)$ is given by $\wt f(z,\om)=zf(\om)$,   then \\ $f_{\lb^kz}(\om)=\wt f(\lb^kz,\om)=\lb^kzf(\om)$,  and we obtain
$$
T_{\lb}^k(\wt f)(z,\om)=T^k(f_{\lb^kz})(\om)=\lb^kzT^k(f(\om)), \   k=1,2, ...
$$
\noindent
By Theorem \ref{t42}, the averages
\[
\frac 1n \sum_{k=0}^{n-1}T_{\lb}^k(\wt f)(z,\om)=z\,\frac 1n \sum_{k=0}^{n-1}\lb^kT^k(f(\om))
\]
\noindent
converge a.u. on $(z,\om)\in \Bbb C_1 \otimes \Om$. Thus, by Lemma \ref{l43}, the above averages converge a.u. on $\Om$ for some $z\in\mathbb C_1$, which implies that the averages
\[
\frac 1n \sum_{k=0}^{n-1}\lb^kT^k(f)
\]
converge a.u. Therefore, by linearity, $A_n^{(P)}(T)(f)$ converge a.u.

Now, assume that $f\in D=\cal L^1\cap \cal L^\ii$. If we fix $\ep >0$ and take $P$ to satisfy the inequality (\ref{e41}), then
\[
\| A_n^{(P)}(T)(f) - B_n(T)(f)\|_{\ii}\leq \| f \|_{\ii}\,\frac 1n \sum_{k=0}^{n-1} | \beta_k - P(k) |< \ep\,\| f \|_{\ii}
\]
for all big enough $n$. Thus, Lemma \ref{l41} entails a.u. convergence of the sequence $\{B_n(T)(f)\}$, which completes the proof since the set $D$ is dense in $\cal L^1$.
\end{proof}

Now we can present the main result of the section:

\begin{teo}\label{t44}
Let $T\in DS$, and let $\{\bt_k\}$ be a bounded Besicovitch sequence. Then, given $f\in\cal R_\mu$, the averages (\ref{e42}) converge a.u. to some $\wh f\in\cal R_\mu$.
\end{teo}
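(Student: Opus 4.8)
The plan is to reduce the claim for $f\in\cal R_\mu$ to the already-established $\cal L^1$ case (Theorem \ref{t43}), exploiting the approximation afforded by Proposition \ref{p21}. First I would record that $B_n(T)(f)\in\cal R_\mu$ for every $n$: each $T^k(f)\in\cal R_\mu$ because $T(\cal R_\mu)\su\cal R_\mu$ (a consequence of Proposition \ref{p21}, recorded before Theorem \ref{t42}), and $B_n(T)(f)$ is a finite linear combination of such functions. Hence the sought limit, once shown to exist a.u., will automatically lie in $\cal R_\mu$ by the a.u.-closedness of $\cal R_\mu$ (Proposition \ref{p22}). It therefore remains only to prove a.u. convergence of $\{B_n(T)(f)\}$.

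The engine of the argument is a uniform tail estimate. Given $\ep>0$, Proposition \ref{p21} furnishes a decomposition $f=g_\ep+h_\ep$ with $g_\ep\in\cal L^1$ and $\|h_\ep\|_\ii\leq\ep$. Iterating the $\cal L^\ii$-contractivity in (\ref{e31}) gives $\|T^k(h_\ep)\|_\ii\leq\|h_\ep\|_\ii\leq\ep$ for all $k$, whence
\[
\|B_n(T)(h_\ep)\|_\ii\leq\frac1n\sum_{k=0}^{n-1}|\bt_k|\,\|T^k(h_\ep)\|_\ii\leq C\ep
\]
uniformly in $n$, where $C$ bounds $\{|\bt_k|\}$. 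Meanwhile $\{B_n(T)(g_\ep)\}$ converges a.u. by Theorem \ref{t43}, since $g_\ep\in\cal L^1$.

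To combine these I would fix $\dt>0$ and apply the above with $\ep=1/j$, $j\in\mathbb N$, obtaining decompositions $f=g_j+h_j$ and, from the a.u. convergence of $\{B_n(T)(g_j)\}$, sets $G_j\su\Om$ with $\mu(\Om\sm G_j)<\dt 2^{-j}$ on which $\{B_n(T)(g_j)\}$ converges uniformly. Put $G=\bigcap_j G_j$, so that $\mu(\Om\sm G)<\dt$. For each fixed $j$ and all large $n,m$,
\[
\|(B_n(T)(f)-B_m(T)(f))\chi_G\|_\ii\leq\|(B_n(T)(g_j)-B_m(T)(g_j))\chi_G\|_\ii+\frac{2C}j,
\]
and the first term on the right tends to $0$ as $n,m\to\ii$ because $G\su G_j$. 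Letting $n,m\to\ii$ and then $j\to\ii$ shows that $\{B_n(T)(f)\chi_G\}$ is Cauchy in $\|\cdot\|_\ii$, hence converges uniformly on $G$; as $\dt$ is arbitrary this is precisely a.u. convergence of $\{B_n(T)(f)\}$ (alternatively, the construction exhibits $\{B_n(T)(f)\}$ as an a.u.-Cauchy sequence in $\cal L^0_\mu$, which converges a.u. by the completeness in Proposition \ref{p41}).

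The step I expect to be the main obstacle is the passage from the family of ``approximate Cauchy'' estimates, each carrying an irreducible error $2C/j$, to a single good set $G$ on which genuine uniform Cauchyness holds. This is handled by choosing the exceptional sets $G_j$ with summable measures $\dt 2^{-j}$, so that their intersection $G$ simultaneously serves every $j$, after which driving $j\to\ii$ annihilates the error. A secondary point to verify is that the restrictions to $G$ of the relevant differences indeed lie in $\cal L^\ii$, so that the sup-norm Cauchy criterion delivers an honest uniform limit $\wh f$, which by the first paragraph belongs to $\cal R_\mu$.
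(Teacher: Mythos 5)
Your proposal is correct and follows essentially the same route as the paper: decompose $f=g+h$ via Proposition \ref{p21} with $\|h\|_\ii$ small, apply Theorem \ref{t43} to the $\cal L^1$-part, bound $\|B_n(T)(h)\|_\ii\leq C\|h\|_\ii$ uniformly in $n$, and conclude by an almost uniform Cauchy argument together with Propositions \ref{p41} and \ref{p22}. The only cosmetic difference is that you build a single good set by intersecting sets of summable measure over $\ep=1/j$, whereas the paper fixes $\ep,\dt$ and invokes the a.u.-completeness of $\cal L^0_\mu$ directly; both are valid.
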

\begin{proof}
Let $C\neq 0$ be such that $\sup\,\{|\bt_k|\}\leq C$. Fix $\ep>0$ and $\dt>0$. In view of Proposition \ref{p21}, there exist $g\in\cal L^1$ and $h\in\cal L^\ii$ such that
\[
f=g+h, \ \ g\in\cal L^1, \text{\ \ and\ \ } \| h\|_\ii\leq\frac\dt{3C}.
\]
Since $g\in\cal L^1$, Theorem \ref{t43} implies that there exists $E\su \Om$ and $N\in \mathbb N$ satisfying conditions
\[
\mu(\Om\sm E)\leq\ep \text{ \ \ and \ \ } \|(B_m(g)-B_n(g))\chi_E\|_\ii\leq\frac\dt3\, \ \ \ \forall\ \ m,n\ge N.
\]
Then, given $m,n\ge N$, we have
\[
\begin{split}
\|(B_m(f)-B_n(f))\chi_E\|_\ii&\leq\|(B_m(g)-B_n(g))\chi_E\|_\ii+\|(B_m(h)-B_n(h))\chi_E\|_\ii\\
&\leq\frac\dt3+\|B_m(h)\|_\ii+\|B_n(h)\|_\ii\leq \frac\dt3+2C\|h\|_\ii\leq\dt,
\end{split}
\]
implying, by Propositions \ref{p41} and \ref{p22}, that the sequence $\{M_n(f)\}$ converges a.u. to some
$\wh f\in\cal R_\mu$.
\end{proof}

\section{Wiener-Wintner-type ergodic theorem in $\cal R_\mu$}

Recall that
$(\Om,\mu)$ is a $\sg$\,-\,finite measure space, and let $\tau:\Om\to\Om$ be a measure preserving transformation (m.p.t.). Assume that $(X,\nu)$ is a finite measure space and $\phi:X\to X$ is also a m.p.t. Given $f:\cal L^0$ and $g\in\cal L^1(X)$, denote
\begin{equation}\label{eq51}
A_n(f,g)(\om,x)=\frac 1n\sum_{k=0}^{n-1}g(\phi^kx)f(\tau^k\om).
\end{equation}
Here is an extension of Bourgain's Return Times theorem to infinite measure \cite[p.\,101]{as0}.

\begin{teo}\label{t51}
Let $F\su \Om$, $\mu(F)<\ii$. Then there exists $\Om_F\su\Om$ such that $\mu(\Om\sm\Om_F)=0$ and for any $(X,\nu,\phi)$ and $g\in\cal L^1(X)$ the averages
\[
A_n(\chi_F,g)(\om,x)=\frac1n\sum_{k=0}^{n-1}g(\phi^kx)\chi_F(\tau^k\om)
\]
converge $\nu$\,-\,a.e. for all $\om\in\Om_F$.
\end{teo}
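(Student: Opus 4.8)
The plan is to read the conclusion as the assertion that, for $\mu$\,-\,almost every $\om$, the $\{0,1\}$\,-\,valued sequence $a_k=\chi_F(\tau^k\om)$ is a \emph{universally good weight}, meaning that for every finite measure preserving system $(X,\nu,\phi)$ and every $g\in\cal L^1(X)$ the averages $\frac1n\sum_{k=0}^{n-1}a_kg(\phi^kx)$ converge $\nu$\,-\,a.e., with one exceptional $\mu$\,-\,null set serving all target systems simultaneously. That uniformity is precisely the content of Bourgain's Return Times theorem, established for probability preserving $(\Om,\mu,\tau)$; since here $\mu$ is merely $\sg$\,-\,finite, the whole task is to transfer it, using $\mu(F)<\ii$ as the one piece of finiteness available.

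First I would split $\Om=C\cup D$ along the Hopf decomposition of $\tau$ into its conservative and dissipative parts, both $\tau$\,-\,invariant. On $D$ almost every orbit meets the finite measure set $F$ only finitely often, so $a_k(\om)=0$ for all large $k$; then for each fixed $x$ the average $A_n(\chi_F,g)(\om,x)$ equals $\frac1n$ times a fixed finite sum once $n$ is large, and hence converges to $0$. Thus $D$ contributes to $\Om_F$ trivially, and the genuine work is confined to $C$.

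On $C$ I would induce $\tau$ on $F$: the first\,-\,return map $\tau_F$ preserves $\mu|_F$, whose total mass equals $\mu(F)<\ii$, so after normalization $(F,\mu_F,\tau_F)$ is a probability preserving system to which the classical theory applies. Writing the visit times to $F$ as $k_0<k_1<\dots$ and $N(n)=\sum_{k<n}a_k$, one factors
\[
\frac1n\sum_{k=0}^{n-1}a_kg(\phi^kx)=\frac{N(n)}n\cdot\frac1{N(n)}\sum_{j=0}^{N(n)-1}g(\phi^{k_j}x),
\]
where the sampled average on the right is a Birkhoff average over the skew product $(\eta,x)\mapsto(\tau_F\eta,\phi^{r(\eta)}x)$ on the finite measure space $F\otimes X$, $r$ being the return time. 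In the genuinely infinite regime one has $N(n)/n\to0$ by Hopf's ratio ergodic theorem, so controlling the sampled averages already forces convergence to $0$, while on any finite measure piece of $C$ the classical Return Times theorem applies to $\chi_F$ directly. Conservativity guarantees that $\mu$\,-\,a.e. $\om\in C$ eventually enters $F$, which lets one start the induced orbit and assemble $\Om_F$.

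The harmonic\,-\,analytic heart — that orbital sequences are good weights uniformly over \emph{all} target systems at once — I would inherit from Bourgain's theorem in the probability case rather than reprove. I expect the main obstacle to be the transfer itself: one must upgrade the a.e.\ convergence of the sampled averages, supplied system\,-\,by\,-\,system by Birkhoff's theorem for the skew product, to convergence holding on a single $\mu$\,-\,null\,-\,complement set of $\om$ valid for every $(X,\nu,\phi)$ and $g$ at once. This is itself a Return\,-\,Times statement for the return\,-\,time cocycle over $(F,\mu_F,\tau_F)$, and one must then patch the exceptional sets measurably across the Hopf splitting so that the final $\Om_F$ depends on $F$ alone and not on the target system — which is exactly the strength the classical theorem, applied to the induced finite system, must be made to deliver.
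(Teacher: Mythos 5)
First, a point of comparison: the paper offers no proof of this statement at all. Theorem \ref{t51} is imported from Assani's unpublished preprint \cite{as0} and used as a black box, so there is no in-paper argument to measure your proposal against; what follows is an assessment of your argument on its own terms.

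Your reductions are sensible as far as they go: the Hopf decomposition disposes of the dissipative part (where $\sum_k\chi_F(\tau^k\om)<\ii$ a.e., so the averages tend to $0$), the identity
\[
\frac1n\sum_{k=0}^{n-1}\chi_F(\tau^k\om)g(\phi^kx)=\frac{N(n)}{n}\cdot\frac1{N(n)}\sum_{j=0}^{N(n)-1}g(\phi^{k_j}x)
\]
is correct, and $N(n)/n$ converges a.e.\ by Hopf's ergodic theorem since $\chi_F\in\cal L^1$. The genuine gap is the step you defer at the end: upgrading convergence of the sampled averages $\frac1{N}\sum_{j<N}g(\phi^{k_j}x)$ from ``for each target system, for a.e.\ $\eta$'' (Birkhoff's theorem on the skew product, with a null set depending on $(X,\nu,\phi,g)$) to ``for a.e.\ $\eta$, for every target system.'' That is not a matter of patching exceptional sets; it is the entire content of the theorem, and the classical Return Times theorem applied to the induced system $(F,\mu_F,\tau_F)$ does not deliver it. Bourgain's theorem for $(F,\mu_F,\tau_F)$ says that for a.e.\ $\eta$ the weights $f(\tau_F^{\,j}\eta)$ are universally good when indexed by the \emph{induced} time $j$; what you need is that the $\{0,1\}$-valued weight $\chi_F(\tau^k\om)$ is universally good when indexed by the \emph{original} time $k$, equivalently that the sampling sequence $k_j(\eta)=\sum_{i<j}r(\tau_F^{\,i}\eta)$ --- the Birkhoff cocycle of the return-time function $r$, which is not integrable when the orbit of $F$ has infinite measure --- is a universally good subsequence. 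No formal manipulation converts the first statement into the second. Note also that even in the regime $N(n)/n\to0$ you cannot conclude without such an input: for $g\in\cal L^1(X)$ rather than $\cal L^\ii(X)$, the sampled averages must be shown to be a.e.\ bounded uniformly in $N$, on a single full-measure set of $\om$ valid for all targets, which is again a universal maximal inequality along $\{k_j\}$. As written, the proposal correctly locates the difficulty but reduces the theorem to an essentially equivalent unproved statement.
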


The next theorem is  a version of Theorem \ref {t51} where the functions $\chi_F$ and $g\in\cal L^1(X)$
are replaced by $f\in\cal L^1(\Om)$ and  $g\in\cal L^\ii(X)$, respectively.

\begin{teo}\label{t52}
Given $f\in \cal L^1(\Om)$, there exists a set $\Om_f\su \Om$ with $\mu(\Om\sm \Om_f)=0$ such that for any $(X,\nu,\phi)$ and $g\in \cal L^\ii(X)$ the averages (\ref{eq51}) converge $\nu$\,-\,a.e. for all $\om\in\Om_f$.
\end{teo}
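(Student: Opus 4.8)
The plan is to transfer the Return Times convergence from indicator functions up to an arbitrary $f\in\cal L^1(\Om)$ by an approximation argument anchored on a single maximal inequality on $\Om$. First I would dispose of the case of a simple function supported on a set of finite measure. If $f=\sum_{i=1}^m c_i\chi_{F_i}$ with $\mu(F_i)<\ii$, then Theorem \ref{t51} supplies full-measure sets $\Om_{F_i}$, and on $\Om_f=\bigcap_{i=1}^m\Om_{F_i}$ (a finite intersection, so $\mu(\Om\sm\Om_f)=0$) linearity gives $\nu$-a.e. convergence of $A_n(f,g)$ for every system $(X,\nu,\phi)$ and every $g\in\cal L^1(X)$; since $\nu$ is finite, this in particular covers all $g\in\cal L^\ii(X)$.

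The decisive observation for the passage to a general $f$ is a pointwise maximal bound that is \emph{uniform} in the auxiliary system and in $g$. Writing $f^*(\om)=\sup_n\frac1n\sum_{k=0}^{n-1}|f(\tau^k\om)|$ for the Birkhoff maximal function of $\tau$, one has, for every $(X,\nu,\phi)$, every $g$ with $\|g\|_\ii\leq1$, and every $x\in X$,
\[
\sup_n|A_n(f,g)(\om,x)|\leq\sup_n\frac1n\sum_{k=0}^{n-1}|g(\phi^kx)|\,|f(\tau^k\om)|\leq f^*(\om),
\]
and crucially the right-hand side depends on $\om$ alone. Since $\tau$ is measure preserving, the composition operator $g\mapsto g\circ\tau$ is a Dunford-Schwartz operator, so Theorem \ref{t41} yields the weak-type estimate $\mu\{f^*>\lb\}\leq\|f\|_1/\lb$ for all $\lb>0$.

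Next I would fix simple functions $f_j$ with finite-measure support and $\|f-f_j\|_1\leq4^{-j}$. The weak-type bound gives $\mu\{(f-f_j)^*>2^{-j}\}\leq2^{-j}$, so by Borel-Cantelli there is a full-measure set $N\su\Om$ on which $(f-f_j)^*(\om)\to0$. Set $\Om_f=N\cap\bigcap_j\Om_{f_j}$, a countable intersection of full-measure sets, whence $\mu(\Om\sm\Om_f)=0$. Fixing $\om\in\Om_f$, a system $(X,\nu,\phi)$, and $g$ with $\|g\|_\ii\leq1$, subadditivity of the oscillation $\operatorname{osc}(c)=\limsup_{m,n}|c_m-c_n|$ together with the uniform bound applied to $f-f_j$ gives
\[
\operatorname{osc}_n A_n(f,g)(\om,x)\leq\operatorname{osc}_n A_n(f_j,g)(\om,x)+2(f-f_j)^*(\om).
\]
For each $j$ the first term vanishes for $\nu$-a.e. $x$ by the simple-function case, so off a countable union of $\nu$-null sets I may let $j\to\ii$ and conclude $\operatorname{osc}_n A_n(f,g)(\om,x)=0$, i.e. $A_n(f,g)(\om,\cdot)$ converges $\nu$-a.e.; scaling then removes the normalization $\|g\|_\ii\leq1$.

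The point that does the real work — and where I expect the only genuine difficulty to hide — is the uniformity of the error estimate: because $(f-f_j)^*(\om)$ does not see the system $(X,\nu,\phi)$ or $g$, the single set $\Om_f$ serves all auxiliary systems and all $g\in\cal L^\ii(X)$ simultaneously, which is exactly the universal-weight content of the statement. This uniformity is bought precisely by restricting $g$ to $\cal L^\ii(X)$ rather than $\cal L^1(X)$, since that is what allows $\|g\|_\ii$ to be factored out of the maximal function, leaving a quantity depending on $\om$ only.
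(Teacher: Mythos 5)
Your argument is correct and follows essentially the same route as the paper: approximate $f$ in $\cal L^1$ by simple functions with finite-measure supports, invoke Theorem \ref{t51} for each indicator, and use the Birkhoff maximal inequality on $\Om$ (whose bound depends on $\om$ alone, hence is uniform over all auxiliary systems and all $g$ with $\|g\|_\ii\leq 1$) to close the gap via an oscillation estimate. The only cosmetic difference is that you extract the full-measure set where $(f-f_j)^*(\om)\to 0$ by a Borel--Cantelli argument with the rate $\|f-f_j\|_1\leq 4^{-j}$, whereas the paper selects an $\om$-dependent subsequence $m_j$ from the sets $\Om_{m,j}$; both are standard and equivalent.
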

\begin{proof} Let $f\in \cal L^1(\Om)$. Then there exist $\{ \lb_{m,i}\} \su \mathbb C$ and $F_{m,i}\su \Om$ with $\mu(F_{m,i})<\ii$, $m=1,2,\dots$, $1\leq i\leq l_m$, such that
\[
\| f-f_m\|_1\to 0, \text{ \ where \ } f_m=\sum_{i=1}^{l_m}\lb_{m,i}\chi_{F_{m,i}}.
\]
If
\[
\Om_{m,j}=\left \{\om\in\Om: \ \sup_n\frac1n\sum_{k=0}^{n-1}|f-f_m|(\tau^k\om)>\frac1j\right\},
\]
then, due to the maximal ergodic inequality, we have
\[
\mu(\Om_{m,j})\leq j\|f-f_m\|_1,
\]
which implies that $\mu (\cap_m\Om_{m,j})=0$ for a fixed $j$. Therefore, denoting
\[
\Om_0=\Om\sm\left(\cup_j\cap_m \Om_{m,j}\right),
\]
we obtain $\mu(\Om\sm \Om_0)=0$.

If $\om\in\Om_0$, then $\om\notin \Om_{m_j,j}$ for every $j$ and some $m_j$ and, therefore,
\begin{equation}\label{eq52}
\sup_n\frac 1n \sum_{k=0}^{n-1}|f-f_{m_j}|(\tau^k\om)\leq \frac 1j \text{ \ for all \ } j \text{ \ and \ }\om\in\Om_0.
\end{equation}

Now, by Theorem \ref{t51}, there exist $\Om_{j,i}\su \Om$ with $\mu(\Om \sm \Om_{j,i})=0$ such that for every $(X,\nu,\phi)$ and $g\in \cal L^\ii(X)$ the averages
\[
\frac 1n\sum_{k=0}^{n-1}g(\phi^kx)\chi_{F_{m_j,i}}(\tau^k\om)
\]
converge $\nu$-a.e. for all $\om\in \Om_{j,i}$. Then, letting
\[
\Om_f=\left(\cup_{j=1}^\ii\cap_{i=1}^{l_{m_j}}\Om_{j,i}\right)\cap\Om_0,
\]
we obtain $\mu(\Om\sm\Om_f)=0$.

If we pick any $(X,\nu,\phi)$ and $g\in \cal L^\ii(X)$, then the averages $A_n(f_{m_j},g)(\om,X)$ converge $\nu$\,-\,a.e. for every $j$ and all $\om\in\Om_f$, and it follows that there are $X_0\su X$ with $\nu(X\sm X_0)=0$ and $C>0$
such that $|g(\phi^kx)|\leq C$ for all $k$ and $x\in X_0$ and
\[
\liminf_n\operatorname{Re}A_n(f_{m_j},g)(\om,x)=\limsup_n\operatorname{Re}A_n(f_{m_j},g)(\om,x),
\]
\[
\liminf_n\operatorname{Im}A_n(f_{m_j},g)(\om,x)=\limsup_n\operatorname{Im}A_n(f_{m_j},g)(\om,x)
\]
for all $x\in X_0$, $k$, and $\om\in \Om_f$.

Let $\om\in\Om_f$ and $x\in X_0$. Given $k$, taking into account (\ref{eq52}), we have
\[
\begin{split}
\Delta(\om,x)&=\limsup_n\operatorname{Re}A_n(f,g)(\om,x)-\liminf_n\operatorname{Re}A_n(f,g)(\om,x)\\
&=\limsup_n\operatorname{Re}A_n(f-f_{m_j},g)(\om,x)-\liminf_n\operatorname{Re}A_n(f-f_{m_j},g)(\om,x)\\
&\leq 2\sup_n A_n(|f-f_{m_j}|,|g|)(\om,x)\leq 2C \sup_n\frac1n\sum_{k=0}^{n-1}|f-f_{m_j}|(\tau^k\om)\leq\frac{2C}j.
\end{split}
\]
Therefore, $\Delta(\om,x)=0$. Similarly,
\[
\limsup_n\operatorname{Im}A_n(f,g)(\om,x)=\liminf_n\operatorname{Im}A_n(f,g)(\om,x),
\]
and we conclude that the averages (\ref{eq51}) converge $\nu$\,-\,a.e. for all $\om\in\Om_f$.
\end{proof}

Now we extend Theorem \ref{t52} to $\cal R_\mu$.
\begin{teo}\label{t53}
Given $f\in\cal R_\mu$, there exists a set $\Om_f\su\Om$ with $\mu(\Om\sm\Om_f)=0$ such that for any finite measure space $(Y,\nu)$, any m.p.t. $\phi: X\to X$, and any $g\in\cal L^\ii(X)$ the averages (\ref{eq51}) converge $\nu$\,-\,a.e. for all $\om\in\Om_f$.
\end{teo}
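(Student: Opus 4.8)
The plan is to reduce to Theorem \ref{t52} by means of the $\cal L^1$-plus-small-sup-norm decomposition furnished by Proposition \ref{p21}, and then to squeeze the two-sided oscillation of the averages. For each $m\in\mathbb N$, Proposition \ref{p21} yields a splitting $f=g_m+h_m$ with $g_m\in\cal L^1(\Om)$ and $\|h_m\|_\ii\leq 1/m$. Applying Theorem \ref{t52} to each $g_m$ produces sets $\Om_{g_m}\su\Om$ with $\mu(\Om\sm\Om_{g_m})=0$ on which $A_n(g_m,g)$ converges $\nu$\,-\,a.e. for every $(X,\nu,\phi)$ and every $g\in\cal L^\ii(X)$. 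Since $\|h_m\|_\ii\le 1/m$, the set $N_m=\{|h_m|>1/m\}$ is $\mu$\,-\,null, and because $\tau$ is measure preserving so is $\bigcup_{k\ge 0}\tau^{-k}(N_m)$; hence on the full-measure set $\wt\Om_m=\Om\sm\bigcup_{k\ge 0}\tau^{-k}(N_m)$ one has $|h_m(\tau^k\om)|\le 1/m$ for all $k\ge 0$. I would then set
\[
\Om_f=\bigcap_{m=1}^\ii\big(\Om_{g_m}\cap\wt\Om_m\big),
\]
so that $\mu(\Om\sm\Om_f)=0$, noting that this set depends only on $f$ and $\tau$, not on the auxiliary system.

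Next I would fix $\om\in\Om_f$, a system $(X,\nu,\phi)$, and $g\in\cal L^\ii(X)$, and put $C=\|g\|_\ii$. Discarding the $\nu$\,-\,null set $\bigcup_{k\ge 0}\phi^{-k}\{|g|>C\}$, one may assume $|g(\phi^k x)|\le C$ for all $k$ and all $x$ in a set of full $\nu$\,-\,measure. For such $x$ and every $m$, the bound $|h_m(\tau^k\om)|\le 1/m$ gives the uniform estimate
\[
\sup_n|A_n(h_m,g)(\om,x)|\le\sup_n\frac1n\sum_{k=0}^{n-1}|g(\phi^kx)|\,|h_m(\tau^k\om)|\le\frac{C}{m}.
\]
Writing $A_n(f,g)=A_n(g_m,g)+A_n(h_m,g)$ and recalling that $A_n(g_m,g)(\om,\cdot)$ converges $\nu$\,-\,a.e. (since $\om\in\Om_{g_m}$), so that its $\limsup$ and $\liminf$ coincide, I would conclude, exactly as in the proof of Theorem \ref{t52}, that
\[
\limsup_n\operatorname{Re}A_n(f,g)(\om,x)-\liminf_n\operatorname{Re}A_n(f,g)(\om,x)\le 2\sup_n|A_n(h_m,g)(\om,x)|\le\frac{2C}{m}
\]
for $\nu$\,-\,a.e. $x$. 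As $m$ is arbitrary, the real part of $A_n(f,g)(\om,\cdot)$ converges $\nu$\,-\,a.e.; the same argument applied to the imaginary part then finishes the proof.

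The step I expect to require the most care is the bookkeeping that keeps the exceptional set $\Om_f\su\Om$ independent of the second dynamical system $(X,\nu,\phi,g)$ — the defining feature inherited from the Return Times theorem. This works out because both ingredients entering $\Om_f$, namely the sets $\Om_{g_m}$ from Theorem \ref{t52} and the $\tau$\,-\,invariance sets $\wt\Om_m$, are manufactured from $f$ and $\tau$ alone, whereas the only system-dependent estimate, the bound $C/m$ on $\sup_n|A_n(h_m,g)|$, holds $\nu$\,-\,a.e. automatically and is uniformly small in $n$. Thus no system-dependent null set needs to be absorbed into $\Om_f$, and the uniform smallness of the tail is precisely what drives the $\limsup$\,-\,$\liminf$ squeeze.
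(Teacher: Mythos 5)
Your proof is correct and follows essentially the same route as the paper's: decompose $f=g_m+h_m$ via Proposition \ref{p21}, apply Theorem \ref{t52} to each integrable part, and squeeze the $\limsup$--$\liminf$ oscillation using $\|h_m\|_\ii\le 1/m$. The only (harmless) differences are that you make explicit the removal of the $\tau$-orbit of the null set where $|h_m|>1/m$, and that you bound $|g(\phi^kx)|$ by $\|g\|_\ii$ where the paper instead invokes the pointwise ergodic theorem to get $\sup_n\frac1n\sum_{k=0}^{n-1}|g|(\phi^kx)<\ii$ for $\nu$-a.e.\ $x$.
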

\begin{proof}
Due to Proposition \ref{p21}, given a natural $m$, there exists $f_m\in \cal L^1(\Om)$ and $h_m\in\cal L^\ii(\Om)$ such that $f=f_m+h_m$ and $\| h_m\|_\ii\leq \frac 1m$. Then there is $\Om_0 \su \Om$ such that $\mu(\Om\sm\Om_0)=0$ and $|h_m(\om)|\leq \frac 1m$ for all $m$ and $\om\in \Om_0$.

By Theorem \ref{t52}, as $\{ f_m\}_{m=1}^\ii\su\cal L^1(\Om)$, for every $m$ there is a set $\Om_m\su\Om$ with
$\mu(\Om \sm \Om_m)=0$ such that for every $(X,\nu,\phi)$ and $g\in\cal L^1(X)$ the averages
\begin{equation}\label{eq53}
A_n(f_m,g)(\om,x)=\frac 1n\sum_{k=0}^{n-1}g(\phi^kx)f_m(\tau^k\om)
\end{equation}
converge $\nu$\,-\,a.e. for all $\om\in \Om_m$. Therefore, if $\Om_f=\cap_{m=0}^\ii \Om_m$, then $\mu(\Om \sm \Om_f)=0$, $|h_m(\om)|\leq \frac 1m$ for all $m$ and $\om\in \Om_f$, and for every $(X,\nu,\phi)$ and $g\in \cal L^1(X)$, the averages (\ref{eq53}) converge $\nu$\,-\,a.e. for all $m$ and $\om\in\Om_f$.

Fix $\om\in\Om_f$, $(X,\nu,\phi)$, $g\in\cal L^1(X,\nu)$ and show that the averages (\ref{eq51}) converge $\nu$\,-\,a.e.
Indeed, as the averages (\ref{eq53}) converge $\nu$\,-\,a.e. for each $m$, there is a set $X_1\su X$ with $\nu(X\sm X_1)=0$ such that the sequence (\ref{eq53}) converges for every $m$ and $x\in X_1$. Also, since the averages
\[
\frac 1n\sum_{k=0}^{n-1}|g|(\phi^kx)
\]
converge $\nu$\,-\,a.e., there is a set $X_2\su X$ such that $\nu(X\sm X_2)=0$ and the sequence
$\frac1n\sum\limits_{k=0}^{n-1}|g|(\phi^kx)$ converges for all $x\in X_2$. Then, letting $X_0=X_1\cap X_2$, we conclude that $\nu(X\sm X_0)=0$, \ $\sup\limits_n\frac 1n\sum\limits_{k=0}^{n-1}|g|(\phi^kx)<\ii$, and the sequence (\ref{eq53}) converges for all $m$ and $x\in X_0$. Now, if $x\in X_0$, we  have
\[
\liminf_n\operatorname{Re}A_n(f_m,g)(\om,x)=\limsup_n\operatorname{Re}A_n(f_m,g)(\om,x),
\]
\[
\liminf_n\operatorname{Im}A_n(f_m,g)(\om,x)=\limsup_n\operatorname{Im}A_n(f_m,g)(\om,x),
\]
which implies that, for every $m$,
\[
\begin{split}
\Delta(\om,x)&=\limsup_n\operatorname{Re}A_n(f,g)(\om,x)-\liminf_n\operatorname{Re}A_n(f,g)(\om,x)\\
&=\limsup_n\operatorname{Re}A_n(f-f_{m},g)(\om,x)-\liminf_n\operatorname{Re}A_n(f-f_{m},g)(\om,x)\\
&\leq  2\sup_n \frac1n\sum_{k=0}^{n-1}|g(\phi^k x)|\cdot|h_m(\tau^k\om)|
\leq\frac2m\sup_n \frac 1n\sum_{k=0}^{n-1}|g|(\phi^k x).
\end{split}
\]
Therefore, $\Delta(\om,x)=0$. Similarly, 
\[
\limsup_n\operatorname{Im}A_n(f,g)(\om,x)=\liminf_n\operatorname{Im}A_n(f,g)(\om,x),
\]
and we conclude that the averages (\ref{eq51}) converge $\nu$\,-\,a.e.
\end{proof}

Letting in Theorem \ref{t53} \ $X=\mathbb C_1=\{x\in \mathbb C: |x|=1\}$ with Lebesgue measure
$\nu$, $\phi_\lb(x)=\lb x$, $x\in X$, for a given $\lb\in X$, and $g(x)=x$ whenever $x\in X$, we obtain Wiener-Wintner theorem for $\cal R_\mu$:

\begin{teo}\label{t54} 
If $f\in\cal R_\mu$, then there is a set $\Om_f\su \Om$ with $\mu(\Om\sm\Om_f)=0$ such that the averages
\[
\frac 1n \sum_{k=0}^{n-1}\lb^kf(\tau^k\om)
\]
converge for all $\om\in\Om_f$ and $\lb\in\mathbb C_1$.
\end{teo}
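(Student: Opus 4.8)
The plan is to obtain Theorem \ref{t54} directly from Theorem \ref{t53} by specializing the auxiliary dynamical system, exploiting the decisive feature that the set $\Om_f$ produced in Theorem \ref{t53} is selected \emph{before} and \emph{independently of} the system $(X,\nu,\phi)$ and the weight $g$.

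First I would fix $f\in\cal R_\mu$ and let $\Om_f\su\Om$ be the set furnished by Theorem \ref{t53}, so that $\mu(\Om\sm\Om_f)=0$. For a given $\lb\in\mathbb C_1$ I would take the system $X=\mathbb C_1$ equipped with Lebesgue measure $\nu$, the rotation $\phi_\lb(x)=\lb x$, which is a $\nu$-preserving transformation of $X$, and $g(x)=x$, which lies in $\cal L^\ii(X)$ since $|g(x)|=1$. With these choices one has $\phi_\lb^k(x)=\lb^k x$ and $g(\phi_\lb^k x)=\lb^k x$, so the averages in (\ref{eq51}) become
\[
A_n(f,g)(\om,x)=\frac1n\sum_{k=0}^{n-1}\lb^k x\,f(\tau^k\om)=x\cdot\frac1n\sum_{k=0}^{n-1}\lb^k f(\tau^k\om).
\]
By Theorem \ref{t53}, for every $\om\in\Om_f$ these averages converge for $\nu$-a.e. $x\in\mathbb C_1$.

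Next I would discard the harmless factor $x$. Since $\nu(\mathbb C_1)>0$ and every $x\in\mathbb C_1$ satisfies $|x|=1\neq 0$, the convergence of $x\cdot\frac1n\sum_{k=0}^{n-1}\lb^k f(\tau^k\om)$ for at least one (in fact $\nu$-almost every) $x$ forces the convergence of $\frac1n\sum_{k=0}^{n-1}\lb^k f(\tau^k\om)$ itself, by multiplying through by $x^{-1}$. Thus for the chosen $\lb$ the averages $\frac1n\sum_{k=0}^{n-1}\lb^k f(\tau^k\om)$ converge for every $\om\in\Om_f$.

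The final and decisive point is that $\Om_f$ does not depend on $\lb$: in Theorem \ref{t53} the quantifier over all systems $(X,\nu,\phi)$ and all $g\in\cal L^\ii(X)$ sits \emph{inside} the statement, after $\Om_f$ has been fixed. Hence the single set $\Om_f$ serves simultaneously for every $\lb\in\mathbb C_1$, and the theorem follows. I do not expect a genuine obstacle here, since the result is essentially a specialization of Theorem \ref{t53}; the only point deserving care is precisely this uniformity of $\Om_f$ in $\lb$, which is exactly the gain of the Return-Times-based Theorem \ref{t53} over a weaker version in which the exceptional set would be permitted to depend on the system.
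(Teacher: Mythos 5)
Your proposal is correct and coincides with the paper's own derivation: the authors obtain Theorem \ref{t54} by exactly this specialization of Theorem \ref{t53} to $X=\mathbb C_1$ with Lebesgue measure, $\phi_\lb(x)=\lb x$, and $g(x)=x$, relying on the fact that $\Om_f$ is chosen independently of the auxiliary system. Your added remarks about dividing out the unimodular factor $x$ and about the uniformity of $\Om_f$ in $\lb$ simply make explicit what the paper leaves implicit.
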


Let $P(k)=\sum\limits_{j=1}^s z_j\lb_j^k, \ k=0,1,2,\dots$ be a trigonometric polynomial (see Section 4). Then, by linearity, Theorem \ref{t54} implies the following.

\begin{cor}\label{c51}
Given $f\in\cal R_\mu$, there exists a set $\Om_f\su \Om$ with $\mu(\Om\sm\Om_f)=0$ such that the averages
\[
A_n(\{P(k)\},f)(\om)=\frac 1n\sum_{k=0}^{n-1}P(k)f(\tau^k\om)
\]
converge for every $\om\in\Om_f$ and any trigonometric polynomial $P(k)$.
\end{cor}

We will need the following.

\begin{pro}\label{p51}
If $f\in \cal L^1\cap \cal L^\ii$, then there exists $\Om_f\su \Om$ with $\mu(\Om\sm\Om_f)=0$ such that the averages
\begin{equation}\label{eq61}
A_n(\ol\bt,f)(\om)=\frac1n \sum_{k=0}^{n-1}\bt_kf(\tau^k\om)
\end{equation}
converge for every $\om\in \Om_f$ and any bounded Besicovitch sequence $\ol\bt=\{\bt_k\}$.
\end{pro}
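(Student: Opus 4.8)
The plan is to reduce the Besicovitch case to the trigonometric-polynomial case already settled in Corollary \ref{c51}, exploiting that $f\in\cal L^\ii$ forces a uniform bound on the orbit values $f(\tau^k\om)$. I would take $\Om_f$ to be the intersection of the full-measure set supplied by Corollary \ref{c51} (on which $A_n(\{P(k)\},f)(\om)$ converges for \emph{every} trigonometric polynomial $P$ simultaneously) with the set $\Om_0=\Om\sm\bigcup_{k\ge 0}\tau^{-k}\{|f|>\|f\|_\ii\}$, on which $|f(\tau^k\om)|\leq\|f\|_\ii$ for all $k$. Since $\tau$ is measure preserving and $\{|f|>\|f\|_\ii\}$ is null, each preimage is null and so is the countable union; hence $\mu(\Om\sm\Om_f)=0$.

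The key estimate I would record is that, for any bounded Besicovitch sequence $\ol\bt=\{\bt_k\}$, any trigonometric polynomial $P$, and any $\om\in\Om_f$,
\[
\big|A_n(\ol\bt,f)(\om)-A_n(\{P(k)\},f)(\om)\big|\leq\|f\|_\ii\,\frac1n\sum_{k=0}^{n-1}|\bt_k-P(k)|.
\]
Given $\ep>0$, the defining property (\ref{e41}) lets me choose $P$ with $\limsup_n\frac1n\sum_{k=0}^{n-1}|\bt_k-P(k)|<\ep$, so the right-hand side stays below $\|f\|_\ii\,\ep$ for all sufficiently large $n$.

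To finish, I would invoke the Cauchy criterion. For fixed $\om\in\Om_f$ and the above $P$, I split $\big|A_n(\ol\bt,f)(\om)-A_m(\ol\bt,f)(\om)\big|$ via the triangle inequality through $A_n(\{P(k)\},f)(\om)$ and $A_m(\{P(k)\},f)(\om)$. The two outer terms are each at most $\|f\|_\ii\,\ep$ for large $m,n$ by the estimate above, while the middle term $\big|A_n(\{P(k)\},f)(\om)-A_m(\{P(k)\},f)(\om)\big|$ tends to $0$ because Corollary \ref{c51} gives convergence of $A_n(\{P(k)\},f)(\om)$ for $\om\in\Om_f$. Hence
\[
\limsup_{m,n}\big|A_n(\ol\bt,f)(\om)-A_m(\ol\bt,f)(\om)\big|\leq 2\|f\|_\ii\,\ep,
\]
and letting $\ep\to 0$ shows the sequence is Cauchy, therefore convergent.

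The only point demanding care — and the reason the statement is not immediate — is the \emph{uniformity} of $\Om_f$ over all Besicovitch sequences: as $\ol\bt$ varies, the approximating polynomial $P$ varies, yet one null-complement set must serve every choice. This is exactly what Corollary \ref{c51} provides, since there a single set handles all trigonometric polynomials at once, and the orbit bound coming from $f\in\cal L^\ii$ is likewise independent of $\ol\bt$. I expect no further obstacle, the remaining work being only the routine approximation estimate and Cauchy argument sketched above.
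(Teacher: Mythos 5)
Your proposal is correct and follows essentially the same route as the paper: intersect the single full-measure set from Corollary \ref{c51} (valid for all trigonometric polynomials at once) with a full-measure set on which $|f(\tau^k\om)|\leq\|f\|_\ii$ for all $k$, then transfer convergence from $A_n(\{P(k)\},f)$ to $A_n(\ol\bt,f)$ via the estimate $\|f\|_\ii\cdot\frac1n\sum_{k=0}^{n-1}|\bt_k-P(k)|$. The only cosmetic difference is that you close with the Cauchy criterion for the complex sequence directly, whereas the paper bounds the $\limsup$--$\liminf$ gap of the real and imaginary parts separately; the two finishes are interchangeable.
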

\begin{proof}
By Corollary \ref{c51}, there exists a set $\Om_{f,1}\su \Om$, $\mu(\Om\sm\Om_{f,1})=0$, such that the sequence
$\frac 1n\sum\limits_{k=0}^{n-1}P(k)f(\tau^k\om)$ converges for every $\om\in \Om_{f,1}$ and any trigonometric polynomial $P(k)$. Also, since $f\in \cal L^\ii$, there is a set $\Om_{f,2}\su \Om$, $\mu(\Om\sm \Om_{f,2})=0$,
such that $|f(\tau^k\om)|\leq \| f\|_\ii$ for every $k$ and $\om\in \Om_{f,2}$. If we set $\Om_f=\Om_{f,1}\cap \Om_{f,2}$, then $\mu(\Om\sm\Om_f)=0$.

Now, let $\om\in\Om_f$, and let $\ol\bt=\{\bt_k\}$ be a Besicovitch sequence. Fix $\ep>0$, and choose a trigonometric polynomial
$P(k)$ to satisfy condition (\ref{e41}). Then we have
\[
\begin{split}
\Delta(\om)&=\limsup_n\operatorname{Re}A_n(\ol\bt,f)(\om)-\liminf_n\operatorname{Re}A_n(\ol\bt,f)(\om)\\
&=\limsup_n\operatorname{Re}A_n(\{\bt_k-P(k)\},f)(\om)-\liminf_n\operatorname{Re}A_n(\{\bt_k-P(k)\},f)(\om)\\
&\leq 2\| f\|_\ii\sup_n\frac 1n \sum_{k=0}^{n-1}|\bt_k-P(k)|<2\| f\|_\ii\ep
\end{split}
\]
for all sufficiently large $n$. Therefore, $\Delta(\om)=0$, and we conclude that the sequence
$\left\{\operatorname{Re}A_n(\ol\bt,f)(\om)\right\}$ converges. Similarly, we obtain convergence of the sequence $\left\{\operatorname{Im}A_n(\ol\bt,f)(\om)\right\}$, which completes the proof.
\end{proof}

\begin{teo}\label{t56}
If $f\in\cal L^1$, then there exists a set $\Om_f\su \Om$ with $\mu(\Om\sm \Om_f)=0$, such that the averages
(\ref{eq61}) converge for every $\om\in\Om_f$ and any bounded Besicovitch sequence $\ol\bt=\{ \bt_k\}$.
\end{teo}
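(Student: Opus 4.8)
The plan is to reduce the $\cal L^1$ case to the $\cal L^1\cap\cal L^\ii$ case already settled in Proposition \ref{p51}, reproducing the approximation scheme of the proof of Theorem \ref{t52} but with Proposition \ref{p51} playing the role of the building block in place of Theorem \ref{t51}. First I would fix a sequence $\{f_m\}\su\cal L^1\cap\cal L^\ii$ with $\|f-f_m\|_1\to 0$, which exists because bounded functions are dense in $\cal L^1$ on a $\sg$\,-\,finite space.

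The crucial point is that the composition operator $g\mapsto g\circ\tau$ belongs to $DS$, since $\tau$ is measure preserving, so Theorem \ref{t41} furnishes the maximal ergodic inequality for $\tau$. Setting
\[
\Om_{m,j}=\left\{\om\in\Om:\ \sup_n\frac1n\sum_{k=0}^{n-1}|f-f_m|(\tau^k\om)>\frac1j\right\},
\]
one obtains $\mu(\Om_{m,j})\leq j\|f-f_m\|_1$, hence $\mu(\cap_m\Om_{m,j})=0$ for each fixed $j$. Thus $\Om_0=\Om\sm(\cup_j\cap_m\Om_{m,j})$ has full measure, and for every $\om\in\Om_0$ and every $j$ there is an index $m_j$ with $\sup_n\frac1n\sum_{k=0}^{n-1}|f-f_{m_j}|(\tau^k\om)\leq 1/j$. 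I would stress that this set $\Om_0$ is manufactured solely from the functions $f-f_m$ and is independent of any Besicovitch sequence.

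Next, since each $f_m\in\cal L^1\cap\cal L^\ii$, Proposition \ref{p51} supplies a full-measure set $\Om_m^\pr\su\Om$ on which the averages (\ref{eq61}) for $f_m$ converge for every bounded Besicovitch sequence simultaneously. Putting $\Om_f=\Om_0\cap(\cap_m\Om_m^\pr)$ gives $\mu(\Om\sm\Om_f)=0$. For $\om\in\Om_f$ and an arbitrary bounded Besicovitch sequence $\ol\bt=\{\bt_k\}$ with $|\bt_k|\leq C$, I would write $f=f_{m_j}+(f-f_{m_j})$ and, using convergence of $A_n(\ol\bt,f_{m_j})(\om)$, estimate the oscillation
\[
\Delta(\om)=\limsup_n\operatorname{Re}A_n(\ol\bt,f)(\om)-\liminf_n\operatorname{Re}A_n(\ol\bt,f)(\om)\leq 2C\sup_n\frac1n\sum_{k=0}^{n-1}|f-f_{m_j}|(\tau^k\om)\leq\frac{2C}j.
\]
Since $j$ is arbitrary, $\Delta(\om)=0$; the identical argument for the imaginary part then yields convergence of (\ref{eq61}) for every $\om\in\Om_f$ and every bounded Besicovitch sequence.

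The step I expect to require the most care is the bookkeeping guaranteeing that a single null set serves all Besicovitch sequences at once. This turns out to be automatic: $\Om_0$ is built from the maximal inequality applied to $|f-f_{m}|$ and so is sequence-free, while the sets $\Om_m^\pr$ already carry uniformity over all bounded Besicovitch sequences by Proposition \ref{p51}. The bound $C$ of a given sequence enters only in the final estimate, where it is harmless because $j$ may be taken arbitrarily large. No genuinely new analytic obstacle arises beyond faithfully reproducing the approximation scheme of Theorem \ref{t52}.
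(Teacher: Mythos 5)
Your proposal is correct and follows essentially the same route as the paper's own proof: approximate $f$ in $\cal L^1$ by functions in $\cal L^1\cap\cal L^\ii$, use the maximal ergodic inequality for the composition operator to control $\sup_n\frac1n\sum_{k=0}^{n-1}|f-f_{m_j}|(\tau^k\om)$ on a full-measure set independent of the weights, invoke Proposition \ref{p51} for the bounded approximants, and conclude with the $\limsup$--$\liminf$ oscillation estimate. The only (harmless) cosmetic difference is that you intersect the full-measure sets $\Om_m^\pr$ over all $m$ rather than only over the chosen subsequence, which if anything handles the possible $\om$-dependence of the indices $m_j$ more cleanly.
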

\begin{proof}
Let a sequence $\{f_m\}\su\cal L^1\cap\cal L^\ii$ be such that $\|f-f_m\|_1\to 0$. As in the proof of Theorem \ref{t52}, we construct a subsequence $\{f_{m_j}\}$ and a set $\Om_0\su\Om$ with $\mu(\Om\sm\Om_0)=0$ such that
\[
\sup_n\frac1n\sum_{k=0}^{n-1}|f-f_{m_j}|(\tau^k\om)\leq \frac 1j\ \ \ \forall\, \ j \text{ \ and \ }\om\in\Om_0.
\]
By Proposition \ref{p51}, given $j$, there is $\Om_j\su \Om$ with $\mu(\Om\sm \Om_j)=0$ such that the sequence
$\left\{\frac 1n\sum\limits_{k=0}^{n-1}\bt_kf_{m_j}(\tau^k\om)\right\}$ converges for every $\om\in \Om_j$ and any Besicovitch sequence $\{\bt_k\}$.

If we set $\Om_f=\cap_{j=1}^\ii \Om_j \cap\Om_0$, then $\mu(\Om\sm \Om_f)=0$, and for any $\om\in \Om_f$ and any bounded Besicovitch sequence $\{ \bt_k\}$ such that $\sup_k|\bt_k|\leq C$ we have
\[
\begin{split}
\Delta(\om)&=\limsup_n\operatorname{Re}A_n(\ol\bt,f)(\om)-\liminf_n\operatorname{Re}A_n(\ol\bt,f)(\om)\\
&=\limsup_n\operatorname{Re}A_n(\ol\bt,f-f_{m_j})(\om)-\liminf_n\operatorname{Re}A_n(\ol\bt,f-f_{m_j})
(\om)\\
&\leq 2\sup_n\frac 1n \sum_{k=0}^{n-1}|\bt_k| \, |f-f_{m_j}|(\tau^k\om)\leq\frac{2C}j.
\end{split}
\]
Therefore, $\Delta(\om)=0$, hence the sequence $\left\{\operatorname{Re}A_n(\ol\bt,f)(\om)\right\}$ is convergent. Similarly, we derive convergence of the sequence $\left\{\operatorname{Im}A_n(\ol\bt,f)(\om)\right\}$, and the proof is complete.
\end{proof}

Taking into account that the sequence $\{\bt_k\}$
is bounded, we obtain, as in the proof of Theorem \ref{t53}, the following extension of Wiener-Wintner theorem.

\begin{teo}\label{t57}
Given $f\in\cal R_\mu$, there exists a set $\Om_f\su \Om$ with $\mu(\Om\sm \Om _f)=0$ such that the averages (\ref{eq61}) converge for every $\om\in\Om_f$ and every bounded Besicovitch sequence $\{\bt_k\}$.
\end{teo}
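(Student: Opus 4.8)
The plan is to follow the proof of Theorem~\ref{t53} almost verbatim, with Theorem~\ref{t56} (the $\cal L^1$ version for Besicovitch weights) playing the role that Theorem~\ref{t52} played there, and with the boundedness of $\{\bt_k\}$ absorbing the weights.

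First I would use Proposition~\ref{p21}: for each $m\in\mathbb N$ pick a splitting $f=f_m+h_m$ with $f_m\in\cal L^1(\Om)$, $h_m\in\cal L^\ii(\Om)$, and $\|h_m\|_\ii\leq\frac1m$. Since $\tau$ is measure preserving and each bound $|h_m|\leq\frac1m$ holds $\mu$\,-\,a.e., the countably many full-measure sets on which $|h_m(\tau^k\om)|\leq\frac1m$ (ranging over all $k$ and $m$) intersect in a set $\Om_0$ with $\mu(\Om\sm\Om_0)=0$ on which $|h_m(\tau^k\om)|\leq\frac1m$ holds for all $k$ and $m$ simultaneously.

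Next, applying Theorem~\ref{t56} to each $f_m$ yields, for every $m$, a set $\Om_m$ with $\mu(\Om\sm\Om_m)=0$ such that $A_n(\ol\bt,f_m)(\om)$ converges for all $\om\in\Om_m$ and \emph{every} bounded Besicovitch sequence $\ol\bt$. Setting $\Om_f=\Om_0\cap\bigcap_{m=1}^\ii\Om_m$ keeps $\mu(\Om\sm\Om_f)=0$, and this single set will serve for all weight sequences at once. Finally I would fix $\om\in\Om_f$ together with a bounded Besicovitch sequence $\{\bt_k\}$, say $\sup_k|\bt_k|\leq C$, and run the $\limsup-\liminf$ estimate. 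Writing $\Delta(\om)$ for the gap between $\limsup_n$ and $\liminf_n$ of $\operatorname{Re}A_n(\ol\bt,f)(\om)$, the contribution of $f_m$ cancels because $A_n(\ol\bt,f_m)(\om)$ converges, so only the remainder survives:
\[
\Delta(\om)\leq 2\sup_n\frac1n\sum_{k=0}^{n-1}|\bt_k|\,|h_m|(\tau^k\om)\leq\frac{2C}m.
\]
Since $m$ is arbitrary, $\Delta(\om)=0$, and the same argument applied to the imaginary part delivers convergence of $A_n(\ol\bt,f)(\om)$.

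The step requiring the most care, though it is not a deep obstacle, is securing uniformity in the weight sequence: the tail bound $\frac{2C}m$ depends on $\{\bt_k\}$ only through the single constant $C$ and not on $n$, so the one set $\Om_f$ works for every bounded Besicovitch sequence without having to re-run the argument. This is precisely what the boundedness of $\{\bt_k\}$ buys us, and it is the reason the convergent ergodic average of $|g|$ that controlled the remainder in Theorem~\ref{t53} is here replaced by the trivial bound $\frac1n\sum_{k=0}^{n-1}1=1$.
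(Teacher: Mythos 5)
Your proposal is correct and is exactly the argument the paper intends: the paper's "proof" of Theorem \ref{t57} is the single remark that one proceeds as in Theorem \ref{t53}, with Theorem \ref{t56} supplying the $\cal L^1$ case and the bound $\frac1n\sum_{k=0}^{n-1}|\bt_k|\,|h_m(\tau^k\om)|\leq\frac{C}{m}$ replacing the convergent averages of $|g|$. Your version is in fact slightly more careful than the paper's template, since you explicitly intersect over all iterates $\tau^k$ when building $\Om_0$, which is what the final estimate actually requires.
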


\section{Applications to fully symmetric spaces}

For any $f\in\cal L^0_\mu$ the {\it non-increasing rearrangement} of $f$ is defined as
\[
f^*(t)=\inf\left\{\lambda>0: \ \mu\{|f|>\lb\right\}\leq t\}, \ \  t>0,
\]
(see \cite[Ch.\,II, \S\,2]{bs}).

Let $\nu$ be the Lebesgue measure on $(0,\ii)$. A non-zero linear subspace  $E \su \cal L^0_\nu$ with a Banach norm $\|\cdot\|_E$ is called {\it symmetric (fully symmetric)} on  $((0,\ii),\nu)$ if
\[
f \in E, \ g\in \cal L^0_\nu, \ g^*(t)\leq f^*(t) \, \ \ \forall \ \  t>0
\]
(respectively,
\[
f \in E, \ g \in \cal L^0_\nu, \ \int_0^s g^*(t) dt\leq\int_0^s f^*(t) dt \, \ \ \forall \ \ s>0
\ \ (\text{writing\ \ } g \prec\prec f)
\]
implies that $g \in E$ and $\| g\|_E\leq \| f\|_E$.

Let $(E,\| \cdot \|_E)$ be a symmetric (fully symmetric) space on $((0,\ii),\nu)$.
Define
\[
E(\Om)=E(\Om,\mu)=\left\{ f\in \cal L^0_\mu:  f^*(t) \in E\right\}
\]
and set
\[
\| f\|_{E(\Om)}=\|  f^*(t)\|_E,  \ f\in E(\Om).
\]
It is shown in \cite{ks} (see also \cite[Ch.\,3, Sec.\,3.5]{lsz}) that $(E(\Om), \|\cdot\|_{E(\Om)})$ is a Banach space and conditions $f \in E(\Om)$, $g\in L^0_\mu$, $g^*(t)\leq f^*(t)$ for every $t>0$ ($g\prec\prec f$) imply that $g\in E(\Om)$ and $\| g\|_{E(\Om)}\leq \| f\|_{E(\Om)}$. In such a case, we say that $(E(\Om), \|\cdot\|_{E(\Om)})$ is the symmetric (respectively, fully symmetric) space on  $(\Om,\mu)$ {\it generated} by the symmetric (respectively, fully symmetric) space $(E, \|\cdot\|_E)$. Throughout, if it does not cause confusion, we will write $(E, \|\cdot\|_E)$ or simply $E$ instead of $(E(\Om), \| \cdot \|_{E(\Om})$.

Immediate examples of fully symmetric spaces are the spaces $\cal L^p(\Om,\mu)$, $1\leq p\leq \ii$, with standard norms $\| \cdot \|_p$, the space $\cal L^1\cap \cal L^\ii$  with the norm
\[
\|f\|_{\cal L^1\cap \cal L^\ii}=\max\left \{ \|f\|_1,\|f\|_\ii\right\},
\]
and the space $\cal L^1+ \cal L^\ii$ with the norm
\[
\|f\|_{\cal L^1+ \cal L^\ii}=\inf \left \{ \|g\|_1+ \|h\|_\ii: \ f = g + h, \ g\in \cal L^1, \ h \in \cal L^\ii\right\}.
\]

Note that, alternatively,
\[
\cal R_\mu=\left\{f \in \cal L^1+ \cal L^\ii: \ f^*(t)\to 0 \text{ \ as \ }t\to\ii\right\}
\]
and $(\cal R_\mu, \|\cdot\|_{\cal L^1 + \cal L^\ii})$ is a symmetric space \cite[Ch.\,II, \S\,4, Lemma 4.4]{kps}.  In addition,
$\cal R_\mu$ is the closure of $\cal L^1\cap \cal  L^{\ii}$  in $(\cal L^1+\cal L^\ii, \| \cdot \|_{\cal L^1 + \cal L^\ii})$ (see \cite[Ch.\,II, \S\,3, Sec.\,1]{kps}).
Furthermore, it follows from definitions of $\cal R_\mu$ and $\| \cdot \|_{\cal L^1 + \cal L^\ii}$ that if
\[
f \in \cal R_\mu, \ \ g \in \cal L^1+\cal L^\ii\text{ \ and \ } g \prec\prec f,
\]
then $g\in \cal R_\mu$ and $\| g\|_{\cal L^1 +\cal  L^\ii} \leq \| f\|_{\cal L^1 +\cal  L^\ii}$. Therefore, $(\cal R_\mu, \|\cdot\|_{\cal L^1 +\cal  L^\ii})$ is also a fully symmetric space. If $\mu(\Om)<\ii$, then $\cal R_\mu=\cal L^1$.

Also, given $T\in DS$, we have $T(E)\su E$ and $\| T\|_{E \to E} \leq 1$ for any symmetric space $E$ (see \cite[Ch.\,II, \S\,4, Theorem\,4.1]{kps}). In addition,
\[
\int_0^s T(f)^*(t) dt \le \int_0^s f^*(t) dt\, \ \ \forall\, \ s>0,
\]
that is, $T(f)\prec\prec f$ for every $f\in\cal L^1+\cal L^\ii$ (see, for example, \cite[Ch.\,II, \S\,3, Section\,4]{kps}).

\begin{pro}\label{p61}
If $\mu(\Om)=\ii$, then a symmetric space $E $ is contained in
$\cal R_\mu$ if and only if $\mb 1\notin E$.
\end{pro}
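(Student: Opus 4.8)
The plan is to prove both implications of the equivalence, working with the key characterization $\cal R_\mu=\{f\in\cal L^1+\cal L^\ii: f^*(t)\to 0 \text{ as } t\to\ii\}$ noted just above the proposition, together with the order-ideal property of fully symmetric spaces. The central observation is that $\mb 1=\chi_\Om$ is the ``worst'' bounded function: its non-increasing rearrangement is the constant $\mb 1^*(t)\equiv 1$ (since $\mu(\Om)=\ii$, we have $\mu\{|\mb 1|>\lb\}=\ii$ for $0<\lb<1$ and $=0$ for $\lb\ge 1$, so $\mb 1^*(t)=1$ for all $t>0$), which manifestly does not tend to $0$. Hence $\mb 1\notin\cal R_\mu$ by the rearrangement characterization.

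First I would prove the forward (contrapositive) direction: if $\mb 1\in E$, then $E\not\su\cal R_\mu$. This is immediate, because $\mb 1\in E\sm\cal R_\mu$ provides an explicit witness to $E\not\su\cal R_\mu$. So the only real content is the converse. For the converse I would assume $\mb 1\notin E$ and show $E\su\cal R_\mu$, i.e. that every $f\in E$ satisfies $f^*(t)\to 0$. Suppose for contradiction that some $f\in E$ has $f^*(t)\not\to 0$. Since $f^*$ is non-increasing and nonnegative, $f^*(t)\to c$ for some $c>0$, so $f^*(t)\ge c$ for all $t>0$. Then consider $g=c\,\mb 1$: its rearrangement is $g^*(t)\equiv c\le f^*(t)$ for all $t$. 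Because $E$ is symmetric (the weaker of the two hypotheses suffices here — domination of rearrangements pointwise, not just in the Hardy--Littlewood sense), the condition $g^*\le f^*$ with $f\in E$ forces $g\in E$. Thus $c\,\mb 1\in E$, and since $E$ is a linear space, $\mb 1=c^{-1}(c\,\mb 1)\in E$, contradicting $\mb 1\notin E$. This yields $f^*(t)\to 0$, hence $f\in\cal R_\mu$, completing the inclusion $E\su\cal R_\mu$.

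The one step requiring care — and what I would flag as the \emph{main obstacle} — is making precise the assertion that $f^*(t)\not\to 0$ forces a constant lower bound that can be realized by a genuine element of $\cal L^0_\mu$ dominated by $f$. The rearrangement $f^*$ lives on $((0,\ii),\nu)$, so one must either argue entirely on the function-space side (where $f^*\in E$ by definition of $E(\Om)$ and $c\,\chi_{(0,\ii)}\le f^*$ places $c\,\chi_{(0,\ii)}\in E$, giving $\chi_{(0,\ii)}\in E$, which is exactly the abstract analogue of $\mb 1\in E(\Om)$), or transfer back to $\Om$ via the generating construction. I would phrase the argument on $((0,\ii),\nu)$ to keep the rearrangements explicit, then invoke the definition $\|\mb 1\|_{E(\Om)}=\|\mb 1^*\|_E=\|\chi_{(0,\ii)}\|_E$ to identify membership $\mb 1\in E(\Om)$ with $\chi_{(0,\ii)}\in E$; the equivalence $\mb 1\notin E(\Om)\iff\chi_{(0,\ii)}\notin E$ then closes the loop. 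The subtlety is purely that the ``unit'' on $\Om$ and the ``unit'' $\chi_{(0,\ii)}$ on the rearrangement side must be matched correctly under the $\mu(\Om)=\ii$ hypothesis, which guarantees $\mb 1^*=\chi_{(0,\ii)}$ rather than a function of bounded support.
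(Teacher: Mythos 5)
Your proposal is correct and follows essentially the same route as the paper: both directions rest on the observation that $\mb 1^*(t)\equiv 1$ when $\mu(\Om)=\ii$, and the converse is proved by noting that if $f\in E$ has $f^*(t)\to c>0$ then $\mb 1^*\leq c^{-1}f^*$ forces $\mb 1\in E$ by the symmetry of $E$. The extra care you take in matching $\mb 1$ on $\Om$ with $\chi_{(0,\ii)}$ on the rearrangement side is sound bookkeeping but does not change the argument.
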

\begin{proof}
As  $\mu(\Om) = \ii$, we have $\mb 1^*(t) = 1$ for all $t > 0$, hence $\mb 1 \notin\cal R_\mu$. Therefore,
$E$ is not contained in $\cal R_\mu$ whenever $\mb 1\in E$.

Let $\mb 1\notin  E$. If $f \in E$ and $\lim\limits_{t\rightarrow \ii} f^*(t)=\al>0$, then
\[
\mb 1^*(t) \equiv 1 \leq\al^{-1}f^*(t),
\]
implying $\mb 1\in  E$, a contradiction. Thus $\mb 1\notin E$ entails $E\su\cal R_\mu$.
\end{proof}

The following is a version of Theorems \ref{t44} for fully symmetric spaces.
\begin{teo}\label{t61}
Let $E$ be a fully symmetric space such that $\mb 1\notin E$. If $\{ \beta_k \}$ is a bounded Besicovitch sequence, then for every $T\in DS$ and $f\in E$ the averages (\ref{e42}) converge a.u. to some $\wh f\in E$.
\end{teo}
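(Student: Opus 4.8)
The plan is to combine the already-established almost uniform convergence in $\cal R_\mu$ (Theorem \ref{t44}) with the order-theoretic structure of the fully symmetric space $E$. Since $\mb 1\notin E$ and $\mu(\Om)=\ii$ (the case $\mu(\Om)<\ii$ being trivial, as then $\cal R_\mu=\cal L^1$ and $E\su\cal L^1$), Proposition \ref{p61} gives $E\su\cal R_\mu$. Hence for any $f\in E$ we have $f\in\cal R_\mu$, and Theorem \ref{t44} immediately yields that the averages $B_n(T)(f)$ converge a.u. to some $\wh f\in\cal R_\mu$. The only substantive thing left to prove is the sharper conclusion $\wh f\in E$ rather than merely $\wh f\in\cal R_\mu$.

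First I would control the maximal averages in the $E$-norm. Recall from the preliminary remarks on symmetric spaces that every $T\in DS$ satisfies $T(f)\prec\prec f$ and $\|T\|_{E\to E}\leq 1$. Writing $B_n(T)=\frac1n\sum_{k=0}^{n-1}\bt_k T^k$ with $|\bt_k|\leq C$, and using the triangle inequality together with the contraction property $\|T^k(f)\|_E\leq\|f\|_E$, I obtain the uniform bound $\|B_n(T)(f)\|_E\leq C\|f\|_E$ for all $n$. This shows the sequence $\{B_n(T)(f)\}$ is norm-bounded in $E$.

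Next I would pass from this uniform norm bound and the already-known a.u.\ (hence a.e.) convergence $B_n(T)(f)\to\wh f$ to the membership $\wh f\in E$. The key tool is the Fatou-type property of a fully symmetric space: if $B_n(T)(f)\to\wh f$ a.e.\ with $\sup_n\|B_n(T)(f)\|_E<\ii$, then $\wh f\in E$ and $\|\wh f\|_E\leq\liminf_n\|B_n(T)(f)\|_E$. Concretely, a.e.\ convergence of the $B_n(T)(f)$ forces convergence of the non-increasing rearrangements in the appropriate sense, so that $\wh f{}^*(t)\leq\liminf_n (B_n(T)(f))^*(t)$ pointwise in $t$; combined with the order-continuity embedded in the defining inequality of a fully symmetric space and the uniform bound from the previous paragraph, this places $\wh f$ in $E$. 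I expect this Fatou/lower-semicontinuity step to be the main obstacle, since it requires relating a.e.\ convergence of the functions on $\Om$ to control of their rearrangements on $(0,\ii)$ and then invoking the completeness and order structure of $E$; the estimate $\|\wh f\|_E\leq C\|f\|_E$ then follows as a byproduct.

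Assembling these pieces completes the argument: Proposition \ref{p61} gives $E\su\cal R_\mu$, Theorem \ref{t44} gives a.u.\ convergence to some $\wh f\in\cal R_\mu$, the contraction and Besicovitch bound give $\sup_n\|B_n(T)(f)\|_E\leq C\|f\|_E<\ii$, and the Fatou property of the fully symmetric space $E$ upgrades the limit to $\wh f\in E$. I would state the relevant Fatou property as a citable fact from the theory of symmetric spaces (e.g.\ from \cite{kps} or \cite{lsz}) so that the proof reduces to a short verification rather than a development from scratch.
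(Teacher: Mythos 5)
Your reduction of the problem is the same as the paper's: Proposition \ref{p61} gives $E\su\cal R_\mu$, Theorem \ref{t44} gives a.u.\ convergence $B_n(T)(f)\to\wh f\in\cal R_\mu$, and the only remaining issue is $\wh f\in E$. But the tool you propose for that last step --- a Fatou property of $E$, i.e.\ ``a.e.\ convergence plus $\sup_n\|B_n(T)(f)\|_E<\ii$ implies the limit lies in $E$'' --- is not a theorem about fully symmetric spaces; it is an \emph{additional} hypothesis that many fully symmetric spaces fail to satisfy. A counterexample already inside the scope of this theorem is $E=\cal R_\mu$ itself (fully symmetric, $\mb 1\notin E$ when $\mu(\Om)=\ii$): taking $g_n=\chi_{F_n}$ with $F_n\uparrow\Om$, $\mu(F_n)<\ii$, one has $g_n\in\cal R_\mu$, $\|g_n\|_{\cal L^1+\cal L^\ii}=1$ for all $n$, and $g_n\to\mb 1$ a.e., yet $\mb 1\notin\cal R_\mu$. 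So the lemma you intend to cite from \cite{kps} or \cite{lsz} is simply not available at this level of generality, and the norm bound $\|B_n(T)(f)\|_E\leq C\|f\|_E$ is the wrong quantity to carry to the limit.

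The paper closes the gap with a stronger, uniform-in-$n$ piece of information that makes any Fatou property of $E$ unnecessary: with $M=\max\{1,\sup_k|\bt_k|\}$ one has $M^{-1}B_n(T)\in DS$, hence $M^{-1}B_n(T)(f)\prec\prec f$ for every $n$ (Hardy--Littlewood--P\'olya submajorization for Dunford--Schwartz operators, \cite[Ch.\,II, \S\,3]{kps}). Since a.u.\ convergence implies convergence in measure and therefore $(B_n(T)(f))^*\to(\wh f)^*$ a.e.\ on $(0,\ii)$, the \emph{classical} Fatou lemma applied to $\int_0^s$ yields
\[
\int_0^s (M^{-1}\wh f)^*(t)\,dt\;\leq\;\liminf_n\int_0^s (M^{-1}B_n(T)(f))^*(t)\,dt\;\leq\;\int_0^s f^*(t)\,dt
\]
for every $s>0$, i.e.\ $\wh f\prec\prec Mf$. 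Now the \emph{defining} property of a fully symmetric space ($g\prec\prec f$, $f\in E$ $\Rightarrow$ $g\in E$) gives $\wh f\in E$ directly. Your outline has the right shape, but you must replace the norm-boundedness/Fatou-property step by this submajorization argument; as stated, the proof does not go through.
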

\begin{proof}
Since, by Proposition \ref{p21}, $E\su\cal R_\mu$, it follows from Theorem \ref{t44} that averages $B_n(T)(f)$ converge a.u., hence in measure topology, to some $\wh f\in\cal R_\mu$. Therefore, we have
\[
(B_n(T)(f))^*\to (\wh f)^* \ \ \text{a.e. on} \ \ (0,\ii);
\]
see, for example, \cite[Ch.\,II, \S\,2, Property $11^\circ$]{kps}.

With $M=\max\left\{1,\,\sup| \bt_k|\right\}$, we have $M^{-1}B_n(T)\in DS$, hence
\[
M^{-1}B_n(T)(f)\prec\prec  f
\]
for every $n$ \cite[Ch.\,II, \S\,3, Section 4]{kps}. Since
\[
\left(M^{-1}B_n(T)(f)\right)^*\to (M^{-1}\wh f)^*\text{ \ \ a.e.\ \ on \ }(0,s),
\]
Fatou's Lemma entails
\[
\int_0^s (M^{-1}\wh f)^*(t) dt\leq \liminf_n
\int_0^s (M^{-1}B_n(T)(f))^* dt \leq \int_0^s f^* dt
\]
for all $s>0$, that is, $(\wh f)^*\prec\prec M f^*$. As $E$ is a fully symmetric space and $f\in E$, it follows that $\wh f\in E$.
\end{proof}

The next variant of Theorems \ref{t57} for fully symmetric spaces is straightforward.
\begin{teo}\label{t62}
Let $E$ be a fully symmetric and let  $\mb 1\notin E$. Then for every $f\in E$ there exists a set $\Om_f\su \Om$ with $\mu(\Om\sm \Om _f)=0$ such that the averages (\ref{eq61}) converge for every $\om\in\Om_f$ and every bounded Besicovitch sequence $\{\bt_k\}$.
\end{teo}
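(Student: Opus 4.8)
The plan is to reduce the statement entirely to Theorem~\ref{t57} by means of the inclusion $E\su\cal R_\mu$. First I would observe that, since $E$ is a fully symmetric space with $\mb 1\notin E$, Proposition~\ref{p61} yields $E\su\cal R_\mu$. Consequently every $f\in E$ is an element of $\cal R_\mu$, so the hypotheses of Theorem~\ref{t57} are met.

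Next I would simply apply Theorem~\ref{t57} to $f\in\cal R_\mu$: it produces a set $\Om_f\su\Om$ with $\mu(\Om\sm\Om_f)=0$ for which the averages $A_n(\ol\bt,f)(\om)=\frac1n\sum_{k=0}^{n-1}\bt_kf(\tau^k\om)$ of (\ref{eq61}) converge for every $\om\in\Om_f$ and every bounded Besicovitch sequence $\{\bt_k\}$. This is exactly the asserted conclusion, so the proof is complete once the inclusion is in place.

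I would emphasize that, unlike in Theorem~\ref{t61}, no submajorization or Fatou-type argument is required here. The conclusion of Theorem~\ref{t62} concerns only the scalar ergodic averages $\frac1n\sum_{k=0}^{n-1}\bt_kf(\tau^k\om)$, which depend on $f$ merely as an a.e.\ finite measurable function and not on its $E$-norm; in particular there is no claim that a limit function lies back in $E$. Hence the only genuine step is the passage $E\su\cal R_\mu$ supplied by Proposition~\ref{p61}, and the main (indeed the sole) obstacle is recognizing that membership of $f$ in $\cal R_\mu$ is all that Theorem~\ref{t57} needs. Everything else is immediate, which is why the statement is labeled straightforward.
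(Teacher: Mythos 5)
Your proof is correct and is exactly the argument the paper intends: the paper labels Theorem \ref{t62} ``straightforward'' precisely because, once Proposition \ref{p61} gives $E\su\cal R_\mu$, the conclusion is an immediate application of Theorem \ref{t57}. Your added remark that no submajorization step is needed (in contrast with Theorem \ref{t61}) correctly identifies why no further work is required.
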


A symmetric space $(E, \| \cdot \|_E)$ is said to have an {\it order continuous norm} if $\| f_n\|_E\downarrow 0$ whenever $f_n\in E$ and $f_n\downarrow 0$. It is known that a symmetric space $E$ with order continuous norm is fully symmetric and $E\su\cal R_\mu$ \cite[Ch.\,II, \S\,4]{kps}.
\begin{rem}
Since $E \subset \cal R_\mu$ for symmetric space $E$ with order continuous norm, it follows that Theorems \ref{t61} and  \ref{t62} are valid for any symmetric space  with order continuous norm.
\end{rem}

Now we give applications of Theorems \ref{t61}  and  \ref{t62}  to Orlicz, Lorentz, and Marcinkiewicz spaces.

1. Let $\Phi$ be an {\it Orlicz function}, that is, $\Phi:[0,\ii)\to [0,\ii)$ is convex, continuous at $0$ and such that $\Phi(0)=0$ and $\Phi(u)>0$ if $u\ne 0$.  Let
\[
\cal L^\Phi=\left \{ f \in\cal L^0_\mu: \  \int_\Om\Phi\left(a^{-1}|f|\right)d \mu
<\ii \text { \ for some \ } a>0 \right\}
\]
be the corresponding {\it Orlicz space}, and let
\[
\| f\|_\Phi=\inf \left \{ a>0: \ \int_{\Om}\Phi\left(a^{-1}|f|\right) \ d \mu \leq 1\right\}
\]
be the {\it Luxemburg norm} in $\cal L^\Phi$. Then $(\cal L^\Phi, \| \cdot\|_\Phi)$ is a fully symmetric space (see, for example, \cite[Ch.\,2]{es}). Since  $\mu(\Om)=\ii$, we have $\int_\Om\Phi\left (a^{-1}\right )d\mu=\ii$ for all $a>0$, hence $\mathbf 1 \notin \cal L^\Phi$.

Therefore, Theorems \ref{t61}  and  \ref{t62} hold for any Orlicz space $\cal L^\Phi$.

2.  Let $\varphi$ be an increasing  concave function on $[0,\ii)$ with $\varphi(0) = 0$ and $\varphi(t)>0$ for some $t > 0$, and let
\[
\Lb_\varphi=\left \{f \in \cal  L^0_\mu: \ \|f \|_{\Lb_\varphi} =
\int_0^\ii\mu_t(f) \ d \varphi(t)<\ii\right\},
\]
be the corresponding {\it Lorentz space}. Then $(\Lb_\varphi,\|\cdot\|_{\Lb_\varphi})$ is a fully symmetric space. In addition, if $\varphi(\ii)=\lim\limits_{t \to \infty} \varphi(t)= \ii$, then $\mathbf 1 \notin \Lb_\varphi$ (see, for example, \cite[Ch.\,II, \S\,5]{kps}).

Therefore, Theorems  \ref{t61} and \ref{t62} are valid for any Lorentz space $\Lb_\varphi$ such that $\varphi(\ii)=\ii$.

3. Let $\varphi$ be as above, and let
\[
M_\varphi=\left \{f \in \cal L^0_\mu: \ \|f \|_{M_\varphi} =
\sup\limits_{0<s< \ii}\frac{1}{\varphi(s)} \int_0^{s} f^*(t) \ d t < \infty \right \}
\]
be the corresponding {\it Marcinkiewicz space}. It is known that $(M_\varphi, \|\cdot\|_{M_\varphi})$ is a fully symmetric space, and  $\mathbf 1 \notin \Lb_\varphi$ if and only if $\lim\limits_{t \to \ii}\frac{\varphi(t)}{t}=0$ (see, for example, \cite[Ch.\,II, \S\,5]{kps}).

Thus, Theorems  \ref{t61}  and  \ref{t62} hold for any Marcinkiewicz space $M_\varphi$ such that $\lim\limits_{t \to \ii}\frac{\varphi(t)}t=0$.

\end{document}